\author{Sushovan Majhi}
\address{University of California, Berkeley, USA}
\email{smajhi@berkeley.edu}
\title{Vietoris--Rips Complexes of Metric Spaces Near a Metric Graph}
\keywords{Vietoris--Rips complex, Metric Graphs, Graph Reconstruction}
\begin{document}

\begin{abstract}
For a sufficiently small scale $\beta>0$, the Vietoris--Rips complex
$\mathcal{R}_\beta(S)$ of a metric space $S$ with a small Gromov--Hausdorff
distance to a closed Riemannian manifold $M$ has been already known to recover
$M$ up to homotopy type. While the qualitative result is remarkable and
generalizes naturally to the recovery of spaces beyond Riemannian
manifolds---such as geodesic metric spaces with a positive convexity
radius---the generality comes at a cost. Although the scale parameter $\beta$ is
known to depend only on the geometric properties of the geodesic space, how to
quantitatively choose such a $\beta$ for a given geodesic space is still
elusive. In this work, we focus on the topological recovery of a special type of
geodesic space, called a metric graph. For an abstract metric graph
$\mathcal{G}$ and a (sample) metric space $S$ with a small Gromov--Hausdorff
distance to it, we provide a description of $\beta$ based on the convexity
radius of $\mathcal{G}$ in order for $\mathcal{R}_\beta(S)$ to be homotopy
equivalent to $\mathcal{G}$. Our investigation also extends to the study of the
Vietoris--Rips complexes of a Euclidean subset $S\subset\mathbb{R}^d$ with a
small Hausdorff distance to an embedded metric graph
$\mathcal{G}\subset\mathbb{R}^d$. From the pairwise Euclidean distances of
points of $S$, we introduce a family (parametrized by $\varepsilon$) of
path-based Vietoris--Rips complexes $\mathcal{R}^\varepsilon_\beta(S)$ for a
scale $\beta>0$. Based on the convexity radius and distortion of the embedding
of $\mathcal{G}$, we show how to choose a suitable parameter $\varepsilon$ and a
scale $\beta$ such that $\mathcal{R}^\varepsilon_\beta(S)$ is homotopy
equivalent to $\mathcal{G}$.
\end{abstract}

\maketitle

\section{Introduction}
In this paper, we study the homotopy type of the Vietoris--Rips complexes of a
metric space near a metric graph (\defref{metric-graph}). Given a metric space
$(X,d_X)$ and a positive scale $\beta$, the Vietoris--Rips complex
$\Ri_\beta(X)$ is defined as an abstract simplicial complex having a
$k$--simplex for every finite subset of $X$ with cardinality $(k+1)$ and
diameter (\defref{diam}) less than $\beta$. 

The construction of the Vietoris--Rips complex $\Ri_\beta(X)$ can be loosely
thought of as the ``fattening'' of the metric space $X$ by an amount $\beta$.
For a well-behaved metric space $X$ and sufficiently small scale $\beta$, the
``thickened'' $X$ is expected to be homotopy equivalent to $X$. For sufficiently
small $\beta$, Hausmann showed in \cite{hausmann_1995} that a closed Riemannian
manifold $M$ is homotopy equivalent to its Vietoris--Rips complex
$\Ri_\beta(M)$. Latschev further guarantees in \cite{latschev_2001} the
existence of a sufficiently small scale $\beta$ such that $M$ is homotopy
equivalent to the Vietoris--Rips complex $\Ri_\beta(S)$ of a Gromov--Hausdorff
close metric space $(S,d_S)$. Despite the qualitative guarantee in
\cite{latschev_2001}, it is not apparently clear how to quantitatively choose
such a small $\beta$ for a given Riemannian manifold. The choice of a suitable
$\beta$ is still elusive---it requires explicit knowledge of the curvature
bounds of $M$. It is reasonable to ask if we can quantitatively choose $\beta$
for some special cases.

Although the works of Hausmann (\cite{hausmann_1995}) and Latschev
(\cite{latschev_2001}) consider $M$ to be a closed Riemannian manifold, the
results hold true more generally for a geodesic metric space with a positive
convexity radius; see the remark \cite[p.~179]{hausmann_1995}. Roughly speaking,
the convexity radius of a geodesic space is the radius of the largest
(geodesically) convex ball. A metric graph, denoted $\G$ throughout the paper,
is a special type of geodesic space; see \defref{metric-graph}. In general, the
study of the topology of the Vietoris--Rips complexes of a metric space $S$ is
very delicate. The intrinsic filamentary structure and a positive convexity
radius of $\G$, however, facilitate such a pursuit when $S$ is assumed to be in
a close proximity to $\G$. Our current study revolves around the recovery (up to
homotopy type) of a metric graph $(\G,d_\G)$ from the Vietoris--Rips complexes
of a (sample) metric space $(S,d_S)$ with a small Gromov--Hausdorff and
Hausdorff distance to $\G$.

\subsection{Motivation}
In the last decade, the Vietoris--Rips complexes have received an increasing
popularity in the computational topology and topological data analysis (TDA)
community. In the shape reconstruction paradigm, for example, a point-cloud can
be modeled as a finite metric space $(S,d_S)$ sampled around an unknown
\emph{shape} $(X,d_X)$. The objective then is to \emph{learn} the topology of
$X$ by studying different simplicial complexes of $S$. Some of the popular
choices are Vietoris--Rips complex, \v{C}ech complex, witness complex
\cite{DeSilva2003AWD}, alpha complex \cite{Edelsbrunner2009AlphaS}, etc. In many
applications, the Vietoris--Rips complexes are deemed a better alternative to
the conventional \v{C}ech complexes. The computational scheme of \v{C}ech
complexes is not well-understood and requires explicit knowledge of the ambient
metric space, while the Vietoris--Rips complexes can be computed just from the
pairwise distances of points of $S$. 

The topological reconstruction of an embedded smooth manifold---more generally
spaces with a positive reach---using the Vietoris--Rips complexes has been
studied, for example, in
\cite{Adams_2019,ATTALI2013448,co-tpbr-08,kim2020homotopy}. Such results
embolden us to employ the Vietoris--Rips complexes in the recovery of spaces
beyond the class of positive reach. For an example of such a space, consider a
graph embedded in the Euclidean space. There is a wide spectrum of data-driven
applications requiring the reconstruction of a (hidden) metric graph, both
abstract and embedded, from a finite sample around it. Examples include road-map
reconstruction from GPS traces
\cite{Ahmed:2015:CTD:2820783.2820810,dey_graph_2018_socg}, reconstruction of the
filamentary trajectory of shock from earthquake sensors
\cite{geological-survey}, etc. Among the notable Vietoris--Rips inspired
reconstruction of metric graphs, we mention \cite{aanjaneya2012metric} and
\cite{fasy2018reconstruction,Lecci:2014:SAM:2627435.2697074} under the
Gromov--Hausdorff and Hausdorff sampling conditions, respectively.

\subsection{Related Work}
The current work is discerned to be closely related to and inspired by the
reconstruction of embedded geodesic spaces from a finite sample
\cite{fasy2018reconstruction}. For a Hausdorff--close sample $S$ around an
embedded metric graph $\G$, the authors note that the Euclidean Vietoris--Rips
complex generally fails to be homotopy equivalent to the underlying graph. It
does not come as a surprise---because a metric graph can have very sharp
corners, moreover a small Hausdorff distance does not guarantee a small
Gromov--Hausdorff distance between the sample and the ground truth. The
Euclidean Vietoris--Rips complex does not serve well as a topologically faithful
\emph{reconstruction} of the unknown graph. Instead of the Euclidean metric, the
Vietoris--Rips complexes of the sample under a family of path-based metrics
$(S,d^\eps)$ (defined in \defref{d-eps}) is introduced. Under this metric, the
authors show that the Vietoris--Rips complex $\Ri^\eps_\beta(S)$ and the
underlying metric graph $\G$ have isomorphic fundamental groups; see
\cite[Theorem 4.3]{fasy2018reconstruction}. We further investigate this metric
to find that the result holds for higher homotopy groups as well, as previously
conjectured in \cite{fasy2018reconstruction}. In this context, we also mention
the works of \cite{Adamaszek2018,Adams_2019}, where the homotopy equivalence
results of Hausmann \cite{hausmann_1995} and Latschev \cite{latschev_2001} have
been recast in the light of an alternative metric, called the Vietoris--Rips
thickening, in order to additionally retain the metric structure.

\subsection{Our Contribution}
One of the major contributions of this work is to quantify the scale parameter
at which the Vietoris--Rips complex of a sample $S$ recovers (up to homotopy
type) a metric graph, under both the Gromov--Hausdorff and Hausdorff sampling
conditions. Our main homotopy equivalence results are presented in
\thmref{gh-hom} and \thmref{h-hom}, respectively. The work claims novelty in
using the barycentric subdivision (defined in \secref{prelim}) as a critical
ingredient in establishing the homotopy equivalences.

This paper is organized in the following manner. \secref{prelim} contains
definitions, notations, and facts that are frequently used throughout the paper.
In \secref{abs-gr}, the recovery of an abstract metric graph $\G$ from a
Gromov--Hausdorff close sample is obtained. When the convexity radius $\rho(\G)$
is positive, \thmref{gh-hom} proves the homotopy equivalence between $\G$ and
the Vietoris--Rips complex of the sample for a sufficiently small, positive scale
$\beta$.
\begin{restatable*}[Homotopy Equivalence under Gromov-Hausdorff Distance]
	{theorem}{ghhom}\label{thm:gh-hom}
Let $(\G,d_\G)$ be a compact, path-connected metric graph, $(S,d_S)$ a metric
space, and $\beta>0$ a number such that 
	\[
		3d_{GH}(\G,S)<\beta<\frac{3\rho(\G)}{4}.
	\] Then, $\mod{\Ri_\beta(S)}\simeq\G$.
\end{restatable*}
\secref{emd-gr} is devoted to the recovery of an embedded metric graph
$\G\subset\R^d$ from a Hausdorff close, Euclidean sample. We define and study
the relevant properties of the path-based metric $d^\eps$. Finally,
\thmref{h-hom} shows the homotopy equivalence between $\G$ and the
Vietoris--Rips complex $\Ri^\eps_\beta(S)$ of the sample under the metric
$d^\eps$. 
\begin{restatable*}[Homotopy Equivalence under Hausdorff Distance]{theorem}
{hhom}\label{thm:h-hom} Let $\G\subset\R^d$ be an embedded metric graph. Let
$S\subset\R^d$ and $0<\eps<\beta$ be such that
		\[
			4d_{H}(\G,S)<\eps<8\delta(\G)\alpha+2(\delta(\G)+1)\eps\leq\beta
			<\frac{2\rho(\G)}{3\delta(\G)},
		\]
		where $\alpha=(9\delta(\G)+8)\eps$. Then, $\mod{\Ri^\eps_\beta(S)}\simeq\G$.
\end{restatable*}
	
\section{Preliminaries}\label{sec:prelim} In this section, we present
definitions and notations that we use throughout the paper. The standard results
from algebraic topology are stated here without a proof; details can be found
any standard textbook on the subject, e.g., \cite{MUNK,spanier1994algebraic}. 

\subsection{Metric Spaces}
Let $(X,d_X)$ be a metric space. When it is clear from the context, we omit the
metric $d_X$ from the notation, and denote the metric space just by $X$. For any
point $c\in X$ and radius $r\geq0$, the (open) metric ball in $(X,d_X)$ is
denoted by $\B_X(c,r)$. 
\begin{definition}[Diameter]\label{def:diam}
	The diameter, denoted $\diam[X]{Y}$, of a subset
	$Y\subset X$ is defined by the supremum of the pairwise distances in $Y$.
	\[
		\diam[X]{Y}\eqdef\sup_{y_1,y_2\in Y}d_X(y_1,y_2).
	\]
\end{definition}
When $Y$ is compact, its diameter is finite.

A correspondence $\C$ between two (non-empty) metric spaces $(X,d_X)$ and
$(Y,d_Y)$ is defined to be a subset of $X\times Y$ such that
\begin{enumerate}[(a)]
	\item for any $x\in X$, there exists $y\in Y$ such that $(x,y)\in\C$, and
	\item for any $y\in Y$, there exists $x\in X$ such that $(x,y)\in\C$.
\end{enumerate}
We denote the set of all correspondences between $X,Y$ by $\C(X,Y)$. Note that
the definition of correspondence does not depend on the metric space structure
on the sets, however we can define the distortion of a correspondence using the
metrics on them.  For a correspondence $\C\in\C(X,Y)$, its \emph{distortion} is
defined as:
\[
\mathrm{dist}(\C)\stackrel{\text{def}}{=}\sup_{(x_1,y_1),(x_2,y_2)\in\C}
\mod{d_X(x_1,x_2)-d_Y(y_1,y_2)}.
\]
For $\eps>0$, a correspondence is called an \emph{$\eps$-correspondence} if its
distortion is less than $\eps$. 
\begin{definition}[Gromov-Hausdorff Distance]\label{def:gh} Let $(X,d_X)$ and
$(Y,d_Y)$ be two compact metric spaces. The \emph{Gromov-Hausdorff} distance
between $X$ and $Y$, denoted by $d_{GH}(X,Y)$, is defined as:
\[
	d_{GH}(X,Y)\stackrel{\text{def}}{=}
	\frac{1}{2}\left[\inf_{\C\in\C(X,Y)}\mathrm{dist}(\C)\right].
\]
\end{definition}

\subsection{Simplicial Complexes}
An \emph{abstract simplicial complex}~$\K$ is a collection of finite sets such
that if  $\sigma\in\K$, then so are all its non-empty subsets. In general,
elements of $\K$ are called \emph{simplices} of $\K$. The singleton sets in $\K$
are called the \emph{vertices} of $\K$. If a simplex $\sigma\in\K$ has
cardinality $(k+1)$, then it is called a \emph{$k$-simplex} and is denoted by
$\sigma_k$. A $k$--simplex $\sigma_k$ is also written as $[v_0,v_1,\ldots,v_k]$,
where $v_i$'s belong to the vertex set of $\K$. If $\sigma'$ is a (proper)
subset of $\sigma$, then $\sigma'$ is called a (proper) \emph{face} of
$\sigma$, written as $\sigma'\prec\sigma$.

Let $\K_1$ and $\K_2$ be abstract simplicial complexes with vertex sets $V_1$
and $V_2$, respectively. A \emph{vertex map} is a map between the vertex sets.
Let~$\phi \colon V_1 \to V_2$ be a vertex map. We say that $\phi$ induces a
\emph{simplicial map} $\phi:\K_1\to\K_2$ if for all
$\sigma_k=[v_0,v_1,\ldots,v_k]\in K_1$, the image
$$\phi(\sigma_k)\eqdef[\phi(v_0),\phi(v_1),\ldots,\phi(v_k)]$$ is a simplex of
$\K_2$. Two simplicial maps~$\phi,\psi:\K_1\to\K_2$ are called \emph{contiguous}
if for every simplex $\sigma_1\in\K_1$, there exists a simplex $\sigma_2\in\K_2$
such that~$\phi(\sigma_1)\cup\psi(\sigma_1)\prec\sigma_2$. 

For an abstract simplicial complex $\K$ with vertex set $V$, one can define its
\emph{geometric complex} or \emph{underlying topological space}, denoted by
$\mod\K$, as the space of all functions $h:V\to[0,1]$ satisfying the following
two properties:
\begin{enumerate}[(i)]
	\item $\supp(h)\eqdef\{v\in V\mid h(v)\neq0\}$ is a simplex of
	$\K$, and 
	\item $\sum\limits_{v\in\V}h(v)=1$.
\end{enumerate}
For $h\in\mod{\K}$ and vertex $v$ of $\K$, the real number $h(v)$ is called the
$v$--th \emph{barycentric coordinate} of $h$. For a simplex $\sigma$ of $\K$,
its \emph{closed simplex} $\mod{\sigma}$ and \emph{open simplex} $\os{\sigma}$
are subsets of $\mod{\K}$ defined as follows:
\[
	\mod{\sigma}\eqdef\left\{h\in\mod{\K}\mid\supp(h)\subset\sigma\right\},
	\text{and } 
	\os{\sigma}\eqdef\left\{h\in\mod{\K}\mid\supp(h)=\sigma\right\}.
\]
A simplex $\sigma$ of $\K$ is called the \emph{carrier} of a subset
$A\subset\mod{\K}$ if $\sigma$ is the unique smallest simplex such that
$A\subset\mod{\sigma}$. In this work, we use the standard metric topology on
$\mod{\K}$, as defined in \cite{spanier1994algebraic}. A simplicial map
$\phi:\K_1\to\K_2$ induces a continuous (in this topology) map
$\mod\phi:\mod{\K_1}\to\mod{\K_2}$ defined by
\[ \mod\phi(h)(v')\eqdef\sum\limits_{\phi(v)=v'}h(v),\text{ for }v'\in\K_2. \]
From the above definition, it follows that
$\mod{\phi}(h)\in\mod{h(\sigma)}$ whenever $h\in\os{\sigma}$.

A simplicial complex $\K$ is called a \emph{pure $k$--complex} if every simplex
of $\K$ is a face of a $k$--simplex. A simplicial complex $\K$ is called a
\emph{flag complex} if $\sigma$ is a simplex of $\K$ whenever every pair of
points in $\sigma$ is a simplex of $\K$.

\subsection{Barycentric Subdivision}
The \emph{barycenter}, denoted $\bc{\sigma_k}$, of a $k$--simplex
$\sigma_k=[v_0,v_1,\ldots,v_{k}]$ of $\K$ is the point of $\os{\sigma_k}$ such
that $\bc{\sigma_k}(v_i)=\frac{1}{k+1}$ for all $0\leq i\leq k$. Using linearity
of simplices, a more convenient way of writing this is: 
$$\bc{\sigma_k}=\sum_{i=0}^k \frac{1}{k+1}v_i.$$ Let $\K$ be a complex. A
\emph{subdivision} of $\K$ is a simplicial complex $\K'$ such that 
\begin{enumerate}
    \item the vertices of $\K'$ are points of $\mod{\K}$,
    \item if $s'$ is a simplex of $\K'$, then there is $s\in\K$ such that
        $s'\subset\mod{s}$, and
    \item the linear map $h:\mod{\K'}\to\mod{\K}$ sending each vertex of $\K'$
        to the corresponding point of $\mod{\K}$ is a homeomorphism.
\end{enumerate}
For a simplicial complex $\K$, its \emph{barycentric subdivision}, denoted by
$\sd{\K}$, is a special subdivision defined as follows. The vertices of
$\sd{\K}$ are the barycenters of the simplices of $\K$. The simplices of
$\sd{\K}$ are (non-empty) finite sets
$[\bc{\sigma_0},\bc{\sigma_1},\ldots,\bc{\sigma_k}]$ such that
$\sigma_{i-1}\prec\sigma_i$ for $1\leq i\leq k$. If $\sd{\K}$ is further
subdivided, we denote the barycentric subdivision of $\sd{\K}$ by $\sd[2]{\K}$,
and so on. With the definition of the barycentric subdivision at our disposal,
we now prove the following fact, which becomes indispensible for the proofs of
\lemref{gh-sur} and \lemref{h-sur}. 
\begin{proposition}[Commuting Diagram]\label{prop:homotopy} Let $\K$ be a pure
	$k$--complex and $\L$ a flag complex. Let
	$f:~\K\map\L$ and $g:\sd{\K}\map\L$ be simplicial maps such that 
	\begin{enumerate}[(a)]
		\item $g(v)=f(v)$ for every vertex $v$ of $\K$,
		\item $f(\sigma)\cup g(\bc{\sigma})$ is a simplex of $\L$ whenever
		$\sigma$ is a simplex of $\K$.
	\end{enumerate}
	Then, the following diagram commutes up to homotopy: 
	\begin{equation*}
		\begin{tikzpicture} [baseline=(current  bounding  box.center)]
			\node (k1) at (-2,-2) {$\mod{\sd{\K}}$};
			\node (k2) at (2,-2) {$\mod{\K}$};
			\node (k3) at (0,0) {$\mod{\L}$};
			\draw[map] (k2) to node[auto,swap] {$h^{-1}$} (k1);
			\draw[map,swap] (k1) to node[auto,swap] {$\mod{g}$} (k3);
			\draw[map,swap] (k2) to node[auto] {$\mod{f}$} (k3);
		\end{tikzpicture}
	\end{equation*}
	where $h$ is a linear homeomorphism sending each vertex of $\sd{\K}$ to the
    corresponding point of $\mod{\K}$.
\end{proposition}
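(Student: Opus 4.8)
The plan is to realize the desired homotopy as the geometric realization of a single simplicial map defined on a ``prism'' complex interpolating between $\sd{\K}$ and $\K$. Concretely, I would first build a simplicial complex $\mathcal{P}$ whose vertex set is the disjoint union of a level-$0$ copy $(\bc{\sigma},0)$ of the barycenter of every simplex $\sigma$ of $\K$ and a level-$1$ copy $(v,1)$ of every vertex $v$ of $\K$. A non-empty set of these vertices is declared a simplex of $\mathcal{P}$ exactly when it has the form
\[
	\{(\bc{\sigma_0},0),\dots,(\bc{\sigma_j},0)\}\cup\{(v,1)\mid v\in\mu\},
\]
where $\sigma_0\prec\cdots\prec\sigma_j$ is a (possibly empty) chain of simplices of $\K$ and $\mu$ is a (possibly empty) subset of $\sigma_0$ --- with the convention that, when the chain is empty, $\mu$ may be an arbitrary simplex of $\K$. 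One verifies directly that $\mathcal{P}$ is closed under passing to non-empty faces, that its full subcomplex on the level-$0$ (resp.\ level-$1$) vertices is precisely $\sd{\K}$ (resp.\ $\K$), and that $\mod{\mathcal{P}}$ is a triangulation of $\mod{\K}\times[0,1]$ in which $\mod{\K}\times\{0\}$ carries $\sd{\K}$, identified with it through $h$, and $\mod{\K}\times\{1\}$ carries $\K$; this is a routine prism-type construction (cf.\ \cite{MUNK}).

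Next I would define a vertex map $\Phi\colon\mathcal{P}\to\L$ by $(\bc{\sigma},0)\mapsto g(\bc{\sigma})$ and $(v,1)\mapsto f(v)$ and check that it is simplicial. The image of a simplex $\{(\bc{\sigma_0},0),\dots,(\bc{\sigma_j},0)\}\cup\{(v,1)\mid v\in\mu\}$ of $\mathcal{P}$ (so $\mu\subseteq\sigma_0\prec\cdots\prec\sigma_j$) is the vertex set $g(\{\bc{\sigma_0},\dots,\bc{\sigma_j}\})\cup f(\mu)$, and since $\L$ is a flag complex it suffices to show that any two of these vertices span an edge (or coincide). Two vertices of $g(\{\bc{\sigma_0},\dots,\bc{\sigma_j}\})$ do, since $[\bc{\sigma_0},\dots,\bc{\sigma_j}]$ is a simplex of $\sd{\K}$ and $g$ is simplicial; two vertices of $f(\mu)$ do, since $\mu$ is a simplex of $\K$ and $f$ is simplicial; and a pair $f(v),g(\bc{\sigma_i})$ with $v\in\mu\subseteq\sigma_0\subseteq\sigma_i$ does, since $f(v)\in f(\sigma_i)$ while hypothesis~(b) makes $f(\sigma_i)\cup g(\bc{\sigma_i})$ a simplex of $\L$. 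Hence $\mod{\Phi}$ is a well-defined continuous map; by construction it restricts to $\mod{g}$ on the level-$0$ subcomplex and to $\mod{f}$ on the level-$1$ subcomplex, and on the edge joining $(v,1)$ to $(\bc{\{v\}},0)$ it uses hypothesis~(a), i.e.\ $g(v)=f(v)$, so that this edge is sent to the single vertex $f(v)$.

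Putting these together, $\mod{\Phi}\colon\mod{\mathcal{P}}\cong\mod{\K}\times[0,1]\to\mod{\L}$ is a homotopy whose value on $\mod{\K}\times\{1\}$ equals $\mod{f}$ and whose value on $\mod{\K}\times\{0\}$ equals $\mod{g}\circ h^{-1}$, so that $\mod{f}\simeq\mod{g}\circ h^{-1}$ --- exactly the asserted commutativity up to homotopy. The step I expect to be the main obstacle is the verification that $\mod{\mathcal{P}}$ is genuinely $\mod{\K}\times[0,1]$ with these two identifications of its ends: a standard but slightly tedious prism argument, most cleanly organized by induction over the skeleta of $\K$ (a vertex gives an interval, an edge a square split into three triangles about the bottom midpoint, and so on), the $k$-dimensionality of $\K$ keeping the point-set issues harmless. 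It is also worth stressing where the flag hypothesis on $\L$ is indispensable: the naive straight-line homotopy $t\mapsto(1-t)\,\mod{f}(h(x))+t\,\mod{g}(x)$ in $\mod{\L}$ fails in general, because the carrier of $\mod{f}(h(x))$ and the carrier of $\mod{g}(x)$ need not lie in a common simplex of $\L$; the prism $\mathcal{P}$ circumvents this precisely because it only combines the barycenters $\bc{\sigma_i}$ of a chain with a face $\mu$ of its \emph{smallest} member $\sigma_0$, which is exactly the configuration that hypothesis~(b) and the flag property confine to a single simplex of $\L$.
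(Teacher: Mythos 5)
Your argument is correct, but it reaches the conclusion by a genuinely different route than the paper. The paper writes down the homotopy $H$ by hand inside $\mod{\L}$: on each top-dimensional chain $\sigma_0\prec\cdots\prec\sigma_k$ it switches the barycentric coordinates one at a time from $\mod{g}(\bc{\sigma_j})$ to $(\mod{f}\circ h)(\bc{\sigma_j})$ over a partition of $[0,1]$, and uses convexity of the closed simplex $\mod{\eta}$, $\eta=f(\sigma_i)\cup g([\bc{\sigma_i},\ldots,\bc{\sigma_k}])$, to interpolate linearly between consecutive stages. You instead triangulate the cylinder $\mod{\K}\times[0,1]$ by a prism complex $\mathcal{P}$ carrying $\sd{\K}$ on one end and $\K$ on the other, and obtain the homotopy as the realization of a single simplicial map $\Phi$. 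The combinatorial heart is identical: your prism simplex $\{(\bc{\sigma_0},0),\ldots,(\bc{\sigma_j},0)\}\cup\{(v,1)\mid v\in\mu\}$ with $\mu\subseteq\sigma_0$ has image exactly a face of the paper's $\eta$, and both proofs invoke flagness of $\L$ together with hypothesis (b) at precisely this point. What your version buys is that continuity is automatic (simplicial maps realize to continuous maps), whereas the paper's explicit $H$ is defined simplex-by-simplex and its well-definedness and continuity where a point lies in several top-dimensional simplices, or where the flag through the carrier of $x$ is not unique, is left implicit; your version also dispenses with the reduction to the case where every simplex lies in a $k$--simplex. The price is the claim you yourself flag as the main obstacle: that $\mod{\mathcal{P}}$, under the linear map sending $(\bc{\sigma},0)\mapsto(h(\bc{\sigma}),0)$ and $(v,1)\mapsto(v,1)$, really is a triangulation of $\mod{\K}\times[0,1]$ with the two stated boundary identifications. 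This is a standard fact (it is the simplicial mapping cylinder of the subdivision, and your skeleton-by-skeleton induction is the usual way to prove it), but it is the load-bearing step of your proof and would need to be written out or precisely cited for the argument to be complete; as stated it plays the same role that the convexity-of-$\mod{\eta}$ interpolation plays in the paper.
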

\begin{proof}
	We show that the maps $\mod{g}$ and $\left(\mod{f}\circ h\right)$ are
	homotopic by constructing an explicit homotopy
	$H:~\mod{\sd{\K}}\times[0,1]\map\mod{\L}$ with $H(\cdot,0)=g(\cdot)$ and
	$H(\cdot,1)=\left(\mod{f}\circ h\right)(\cdot)$. The commutativity of the
	diagram then follows, since $h$ is a homeomorphism. 
	
	The complex $\K$ is taken to be $k$--dimensional. Without any loss of
	generality, we can assume that every point of $\sd{\K}$ belongs to a
	$k$--simplex. Take an arbitrary $x\in\mod{\sd{\K}}$. We can write $x$ in its
	barycentric coordinates as $x=\sum_{i=0}^k\zeta_i\bc{\sigma_i}$, where
	$\sigma_k=[a_0,a_1,\ldots,a_k]$ is a $k$--simplex of $\K$ and
	$\sigma_i=[a_0,a_1,\ldots,a_i]$ for $0\leq i\leq k$. Consider a partition 
	\[0=t_0<t_1<\ldots<t_{k}=1\] of $[0,1]$. We first define the homotopy
	$H(\cdot,t)$ at $t=t_i$ for $i=0,1,\ldots,k$, and then show that we can
	\emph{interpolate} $H$ continuously (using the straight-line homotopy) for
	any $t\in[t_{i-1},t_i]$. For any $0\leq i\leq k$, define 
	\begin{equation}\label{eqn:homotopy}
		H(x,t_i)=\sum_{j=0}^i\zeta_j\left(\mod{f}\circ
		h\right)(\bc{\sigma_j})+\sum_{j=i+1}^k\zeta_j\mod{g}(\bc{\sigma_j}).
	\end{equation} 
	From the above definition, we note for $i=0$ that 
	\begin{align*}
		H(x,0)=\zeta_0\left(\mod{f}\circ
		h\right)(\bc{\sigma_0})+\sum_{j=1}^k\zeta_j\mod{g}(\bc{\sigma_j})
		&=\zeta_0\mod{g}(\bc{\sigma_0})+\sum_{j=1}^k\zeta_j\mod{g}(\bc{\sigma_j})
		\\&=\sum_{j=0}^k\zeta_j\mod{g}(\bc{\sigma_j})=\mod{g}(x).	
	\end{align*}
	The second equality is due to condition (a) and fact that $\sigma_0$ is a
	vertex $\K$. Also, for $i=k$ we note that 
	\[
	H(x,1)=\sum_{j=0}^k\zeta_j\left(\mod{f}\circ h\right)(\bc{\sigma_j})
	=\left(\mod{f}\circ h\right)\left(\sum_{j=0}^k\zeta_j\bc{\sigma_j}\right)
	=\left(\mod{f}\circ h\right)(x).
	\] 
	The second equality is due to the fact that $h$ is a linear homeomorphism.
	
We now fix $0\leq i\leq k$, and extend the definition of $H(x,t)$ for any
$t\in[t_{i-1},t_i]$ by using the straight-line joining $H(x,t_{i-1})$ and
$H(x,t_{i})$. Such an extension is justified, we show now both $H(x,t_{i-1})$
and $H(x,t_{i})$ belong to $\mod{\eta}$ for some simplex $\eta$ of $\L$. 

Let us take $\eta$ to be the set $f(\sigma_i) \cup
g([\bc{\sigma_{i}},\bc{\sigma_{i+1}},\ldots,\bc{\sigma_{k}}])$. Since $f,g$ are
simplicial maps and $\L$ is a flag complex, condition (b) implies that $\eta$ is
a simplex of $\L$. From the definition \eqnref{homotopy} and the fact that
$h(\bc{\sigma_{l}})\in\mod{\sigma_i}$ for all $0\leq l\leq i$, we conclude that
both $H(x,t_{i-1})$ and $H(x,t_{i})$ belong to $\mod{\eta}$. So, $H(x,t)$ is
well-defined and continuous for $t\in[t_{i-1},t_i]$. Moreover, the images agree
at the endpoints as noted from \eqnref{homotopy}. Therefore, $H$ defines the
desired homotopy.
\end{proof}

\subsection{Metric Graphs}
We follow \cite{burago_course_2001} to define a metric graph. We always denote a
metric graph by $\G$ in the following sections.
\begin{definition}[Metric Graph]\label{def:metric-graph} A metric space
    $(\G,d_\G)$ is called a \emph{metric graph} if there are a non-empty set
    $V$, an equivalence relation $\sim$ on $V$, and a collection $E$ of
    metric segments\footnote{a metric segment is a metric space isometric to a
    real line segment $[0,a]$ for $a\geq0$.} with their endpoints in $V$ such
    that $\G$ is homeomorphic to the quotient space
	$$\left(\bigsqcup_{e\in E}e\right)/\sim.$$
\end{definition}
The metric segments $e\in E$ are called the \emph{edges} and the equivalence
classes of their endpoints in $V$ are called the \emph{vertices} of $\G$. For of
an edge $e\in E$, its length $L(e)$ is the length of the corresponding metric
segment. Note that the length can be different from the distance of the endpoint
of $e$ in the $d_\G$ metric. Extending the definition of length for a continuous
path $\gamma:[0,1]\to\G$, we define the length $L(\gamma)$ as the sum of the
lengths of the (full or partial) edges that $\gamma$ consists of. A metric graph
$\G$, therefore, is endowed with two layers of information: a metric structure
turning it into a length space \cite[Definition 2.1.6]{burago_course_2001} and a
combinatorial structure $(V,E)$ as an abstract graph. Using the above
definition, we are also allowing $\G$ to have loops or single edge cycles. In
this paper, we only consider path-connected, locally-finite metric graphs
$(\G,d_\G)$. As a consequence, $d_\G$ is a finite metric and the degree of any
vertex of $G$ is finite. 

For a pair of points $a,b\in\G$, there always exists a \emph{shortest path} or
\emph{geodesic path} $\gamma$ in $\G$ joining $a,b$ such that the length
$L(\gamma)=d_\G(a,b)$.
\begin{definition}[Convexity Radius]\label{def:conv-rad} Let $(\G,d_\G)$ be a
metric graph. We define the convexity radius of $\G$, denoted $\rho(\G)$, to be
the largest number $r\geq0$ with the following property: if $a,b\in\G$ with
$d_\G(a,b)\leq2r$, then there is a unique geodesic path in $\G$ joining $a$ and $b$.
\end{definition}
For a path-connected graph $\G$ with finitely many vertices, the convexity
radius $\rho(\G)$ is always positive. If $e_s$ is a shortest edge, then
$\rho(\G)\geq\frac{1}{4}L(e_s)$. The equality holds when $e_s$ forms a self-loop
in $\G$. The convexity radius can be defined for a more general geodesic spaces;
see \cite{hausmann_1995} for example. It can be shown that the convexity radius
of a closed Riemannian manifold is also positive. Note that there are geodesic
spaces with vanishing convexity radius. When it is positive, however, we have
the following remarkable theorem by Hausmann. Although the result holds for more
general spaces, we state it here in the context of metric graphs. 
\begin{theorem}[Hausmann's Theorem\cite{hausmann_1995}]\label{thm:hausmann} Let
	$(\G,d_\G)$ be a metric graph with a positive convexity radius $\rho(\G)$.
	Then, for any $0<\alpha<\rho(\G)$, the geometric complex of $\Ri_\alpha(\G)$
	is homotopy equivalent to $\G$.
\end{theorem}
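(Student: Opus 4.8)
The statement is Hausmann's homotopy equivalence theorem \cite{hausmann_1995} specialized to metric graphs, so the plan is in essence to quote that result: its proof uses only that the ambient geodesic space has a positive convexity radius (the remark on \cite[p.~179]{hausmann_1995}), and a path-connected, locally finite metric graph satisfies $\rho(\G)>0$ by \defref{conv-rad} and the remark following it. For self-containedness I would reconstruct the argument along the following lines. Fix a total order $\preceq$ on the point set $\G$ and build a ``geodesic barycenter'' map $\pi:\mod{\Ri_\alpha(\G)}\map\G$: for $h$ with carrier $\sigma=[x_0,\dots,x_k]$, $x_0\prec\cdots\prec x_k$, and barycentric coordinates $t_0,\dots,t_k$, set $q_0=x_0$ and, for $1\le i\le k$, let $q_i$ be the point on the unique shortest path from $q_{i-1}$ to $x_i$ at fraction $t_i/(t_0+\cdots+t_i)$ of the way, and put $\pi(h)=q_k$. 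The hypothesis $\diam[d_\G]{\sigma}<\alpha<\rho(\G)$ is exactly what forces every shortest path occurring in this recursion to be unique and every $q_i$ to remain in one convex region, so that $\pi$ is well defined and continuous (for the metric topology one also needs a uniform estimate on how the iterated averaging depends on the weights). This is the step that uses $\alpha<\rho(\G)$, and the estimates needed are those of Hausmann's paper.

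Next I would realize $\G$ as a subcomplex on which $\pi$ is the identity. Choose a subdivision $\G'$ of the graph $\G$, viewed as a $1$-dimensional simplicial complex, fine enough that every edge of $\G'$ has length $<\alpha$; then every simplex of $\G'$ has diameter $<\alpha$, hence is a simplex of $\Ri_\alpha(\G)$, so $\G'$ is a subcomplex of $\Ri_\alpha(\G)$ with $\mod{\G'}=\G$. On a vertex $\pi$ is visibly the identity, and on the interior of an edge $\{p,q\}$ of $\G'$ the unique shortest path from $p$ to $q$ is that edge itself (its length is $<\alpha<2\rho(\G)$), so the weighted point produced by $\pi$ is the given point of $\mod{\G'}=\G$. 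Hence $\pi$ is a retraction of $\mod{\Ri_\alpha(\G)}$ onto $\G$, and it remains only to show this retraction is a deformation retraction, i.e.\ that $\iota\circ\pi\simeq\mathrm{id}_{\mod{\Ri_\alpha(\G)}}$, where $\iota:\G=\mod{\G'}\hookrightarrow\mod{\Ri_\alpha(\G)}$ is the inclusion.

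This homotopy is the crux, and I expect it to be the main obstacle: $\Ri_\alpha(\G)$ is infinite-dimensional, so there is no simplicial-approximation shortcut and the homotopy must be written down globally. I see two routes. The direct one is to define $H(h,s)$ by sliding $h$ along geodesics toward $\iota(\pi(h))$ and to verify, one simplex of $\Ri_\alpha(\G)$ at a time, that the whole track stays inside a single simplex of $\Ri_\alpha(\G)$ (again using $\alpha<\rho(\G)$); the delicate point is that these simplexwise prescriptions must glue to a map that is continuous on all of $\mod{\Ri_\alpha(\G)}$ simultaneously. The indirect one is to pass to a finite model: cover $\G$ by locally finitely many open convex balls small enough (radius $<\alpha/2$) that they, their twofold enlargements, and all their nonempty intersections are convex hence contractible, apply the nerve lemma to identify the nerve of the cover with $\G$, and then interleave the \v{C}ech and Vietoris--Rips complexes of the centers at scales below $\rho(\G)$, using the geodesic estimates to see that all the comparison maps---including $\pi$ and $\iota$---are homotopy equivalences; chasing the resulting diagram yields $\iota\circ\pi\simeq\mathrm{id}$. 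Either way, combining the retraction with this homotopy shows $\pi$ is a homotopy equivalence, so $\mod{\Ri_\alpha(\G)}\simeq\G$.
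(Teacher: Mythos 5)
The paper does not prove this statement: it is quoted verbatim from Hausmann, with the surrounding text noting only that Hausmann's proof (see the remark on \cite[p.~179]{hausmann_1995}) applies to any geodesic space with positive convexity radius, of which a path-connected, locally finite metric graph is an instance. Your opening paragraph---cite \cite{hausmann_1995} together with that remark---is therefore exactly the paper's treatment, and as far as the paper is concerned nothing more is required.

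Your supplementary reconstruction does follow the strategy of Hausmann's original proof (the ordered, iterated geodesic-averaging map $\pi:\mod{\Ri_\alpha(\G)}\to\G$, which is where $\alpha<\rho(\G)$ enters), and the retraction onto a fine subdivision $\G'$ realized as a subcomplex is sound: for two points of $\G$ at $d_\G$-distance less than $\alpha<\rho(\G)$ the unique geodesic between them is the obvious subsegment (any alternative would produce a cycle of length less than $4\rho(\G)$, contradicting \defref{conv-rad}), so $\pi|_{\mod{\G'}}=\mathrm{id}$. But the step you yourself flag as the crux---showing $\iota\circ\pi\simeq\mathrm{id}_{\mod{\Ri_\alpha(\G)}}$---is left as two candidate routes rather than an argument, so as a standalone proof this has a genuine hole. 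Two remarks on closing it. First, your stated obstacle ("no simplicial-approximation shortcut because $\Ri_\alpha(\G)$ is infinite-dimensional") is not the real difficulty: Hausmann does not build a global deformation retraction at all, but instead shows $\pi$ induces isomorphisms on all homotopy groups and invokes Whitehead, and for that purpose a map from a compact polyhedron can be handled on finite subcomplexes. Second, the subtlety that actually matters is the one you mention only parenthetically: the paper (like Hausmann) uses the \emph{metric} topology on $\mod{\K}$, under which compact sets need not lie in finite subcomplexes and continuity of $\pi$ and of any candidate homotopy requires the uniform estimates of \cite[Section 3]{hausmann_1995}. Those estimates are precisely what you would have to reproduce to make either of your two routes work, so the honest conclusion is that the theorem should be used as a black box here, which is what the paper does.
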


\section{Abstract Metric Graphs}\label{sec:abs-gr} Let $(\G,d_\G)$ be a
path-connected (abstract) metric graph with a positive convexity radius
$\rho(\G)$. This section is devoted to the study of the Vietoris--Rips complexes
of a metric space $(S,d_S)$ that is close to $\G$ in the Gromov-Hausdorff
distance (see \defref{gh}). Our main result of this section is presented in
\thmref{gh-hom}. We show how to choose a scale $\beta$, depending only on the
Gromov-Hausdorff distance $d_{GH}(S,\G)$ and $\rho(\G)$, such that the geometric
complex of $\Ri_\beta(S)$ is homotopy equivalent to $\G$.

Our technique of the proof involves the use of the barycentric subdivision of
the Vietoris--Rips complexes of $S$. The approach uses the concept of
circumcenter in metric space---especially in $\G$. In \subsecref{circumcenter},
we define the notion of a circumcenter, and observe that \emph{small} subsets of
$\G$ have a unique circumcenter. We present the final homotopy equivalence
result of this section in \subsecref{homeq-gh}.

\subsection{Circumcenters in $\G$}\label{subsec:circumcenter} We begin with the
definition of circumcenter in a metric space. For the definition, we follow
\cite{burago_course_2001}.
\begin{definition}[Circumcenter]\label{def:circum} Let $(X,d_X)$ be a metric
space and $Y\subset X$ a bounded set. A \emph{circumscribed ball} of $Y$ is a
(closed) metric ball of minimal radius among the balls containing $Y$. The
radius of a circumscribed ball  is called the \emph{circumradius} and the center
a \emph{circumcenter} of $Y$, denote by $\cent{Y}$.
\end{definition}
A circumcenter may not always exist for subsets of a metric space. Even if it
exists, it may not be unique for some subsets. When the diameter of a compact
subset $Y$ of $\G$ is  less than $\rho(\G)$, however, its circumcenter exists
uniquely. For an example, consider a pair of antipodal points as a bounded
subset of a circle.
\begin{proposition}[Circumcenters in Metric Graphs]\label{prop:center} Let
$Y\subset\G$ be a (non-empty) compact subset with $\diam{Y}<\rho(\G)$. Then,
the circumcenter of $Y$ exists uniquely. Moreover, the circumradius is
$\frac{1}{2}\diam{Y}$.
\end{proposition}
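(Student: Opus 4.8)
The plan is to prove existence and uniqueness of the circumcenter, together with the formula $\mathrm{rad}=\frac12\diam{Y}$, in three stages: a lower bound on the circumradius, construction of a ball of radius exactly $\frac12\diam{Y}$, and uniqueness via the geodesic convexity guaranteed by $\rho(\G)$.

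\medskip
\textbf{Lower bound.} First I would observe that \emph{any} ball $\B_\G(c,r)$ containing $Y$ must have $r\geq\frac12\diam{Y}$: for all $y_1,y_2\in Y$ the triangle inequality gives $d_\G(y_1,y_2)\leq d_\G(y_1,c)+d_\G(c,y_2)\leq 2r$, so taking the supremum over $y_1,y_2$ yields $\diam{Y}\leq 2r$. Hence the circumradius, if a circumscribed ball exists, is at least $\frac12\diam{Y}$.

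\medskip
\textbf{Existence (attaining $\frac12\diam{Y}$).} Next I would exhibit a point $c\in\G$ with $Y\subset\overline{\B_\G}(c,\tfrac12\diam{Y})$, which by the lower bound forces it to be a circumcenter with circumradius exactly $\frac12\diam{Y}$. Pick $y_1,y_2\in Y$ with $d_\G(y_1,y_2)$ close to $\diam{Y}$ (using compactness of $Y$ the supremum is in fact attained, so take $d_\G(y_1,y_2)=\diam{Y}$). Since $\diam{Y}<2\rho(\G)$, \defref{conv-rad} gives a unique geodesic $\gamma$ from $y_1$ to $y_2$; let $c$ be its midpoint, so $d_\G(c,y_1)=d_\G(c,y_2)=\frac12\diam{Y}$. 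It remains to show $d_\G(c,y)\leq\frac12\diam{Y}$ for every $y\in Y$. This is the step I expect to be the main obstacle, since in a general metric space midpoints of diameter-realizing pairs need not dominate the whole set. The argument must genuinely use the graph structure: a shortest path from $c$ to $y$ leaves $c$ along $\gamma$ in one of the two directions (toward $y_1$ or toward $y_2$), or else along $\gamma$ for a while and then branches; in any of these cases one concatenates a sub-path of $\gamma$ with a geodesic to $y$ and uses $d_\G(y,y_1)\le\diam Y$, $d_\G(y,y_2)\le\diam Y$ together with the fact that all the relevant pairwise distances are $<2\rho(\G)$ (so the geodesics involved are unique and behave additively along $\gamma$) to deduce $d_\G(c,y)\le\frac12\diam Y$. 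Concretely, if the geodesic from $c$ to $y$ departs toward $y_1$, then $d_\G(y_2,y)=d_\G(y_2,c)+d_\G(c,y)=\frac12\diam{Y}+d_\G(c,y)$, and since $d_\G(y_2,y)\le\diam{Y}$ we get $d_\G(c,y)\le\frac12\diam{Y}$; the symmetric case and the branching case are handled the same way. (One should double-check the edge case $\diam{Y}=0$, where $Y$ is a single point and the claim is trivial.)

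\medskip
\textbf{Uniqueness.} Finally, suppose $c'$ is another circumcenter, so $Y\subset\overline{\B_\G}(c',\tfrac12\diam{Y})$ as well. Then $d_\G(c,c')\le d_\G(c,y_1)+d_\G(y_1,c')\le\diam{Y}<2\rho(\G)$, and likewise both $c,c'$ lie within distance $\frac12\diam{Y}$ of each of $y_1,y_2$. I would argue that $c,c'$ must both lie on the unique geodesic $\gamma$ from $y_1$ to $y_2$: since $d_\G(y_1,c)+d_\G(c,y_2)\le\diam{Y}=d_\G(y_1,y_2)$, equality holds throughout, so $c$ is on a geodesic from $y_1$ to $y_2$, which is unique and equals $\gamma$; the same applies to $c'$. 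Parametrizing $\gamma$ by arclength, both $c$ and $c'$ are the point at distance $\frac12\diam{Y}$ from $y_1$ along $\gamma$, hence $c=c'$. This completes the proof.
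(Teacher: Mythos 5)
Your overall strategy is the same as the paper's: realize the diameter by a pair $y_1,y_2$ using compactness, take the midpoint $c$ of the unique geodesic $\gamma$ between them, show every $y\in Y$ satisfies $d_\G(c,y)\le\frac12\diam{Y}$, and deduce uniqueness by forcing $c$ onto $\gamma$. Your lower bound is correct, and your uniqueness argument (equality in the triangle inequality forces any circumcenter of radius $\frac12\diam{Y}$ onto the unique geodesic $\gamma$ at its midpoint) is correct and in fact cleaner than the paper's one-line justification. The gap is exactly at the step you flagged as the main obstacle. The claim that a geodesic from $c$ to $y$ departing toward $y_1$ forces $d_\G(y_2,y)=d_\G(y_2,c)+d_\G(c,y)$ does not follow: concatenating the sub-arc of $\gamma$ from $y_2$ to $c$ with the geodesic from $c$ to $y$ gives a path that does not backtrack at $c$, hence a \emph{local} geodesic, but in a metric graph with cycles a local geodesic need not be a shortest path---there may be a shortcut around a cycle avoiding $c$ entirely. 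Nothing in the hypothesis $\diam{Y}<2\rho(\G)$ rules this out, and the ``branching case handled the same way'' remark hides the same difficulty.

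The failure is concrete. Let $\G$ be a circle of circumference $1$ (one vertex with a self-loop), so $\rho(\G)=\frac14$ by the paper's own convention, and let $Y=\{0,\,0.4,\,0.6\}$ in arc-length coordinates. Then $\diam{Y}=0.4<2\rho(\G)$, the diameter is realized by $y_1=0$, $y_2=0.4$, and the midpoint of their geodesic is $c=0.2$; but $d_\G(c,0.6)=0.4>0.2$. Here the geodesic from $c$ to $y=0.6$ departs toward $y_2$, yet $d_\G(y_1,y)=0.4$ while $d_\G(y_1,c)+d_\G(c,y)=0.6$, which is precisely the additivity your argument needs. In fact no point of this circle is within $0.2$ of all three points of $Y$: the minimal enclosing ball has radius $0.3$ and is centred at either $0.3$ or $0.7$, so the radius formula and the uniqueness both fail. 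The gap is therefore not repairable as stated; one needs a hypothesis preventing $Y$ from wrapping most of the way around a short cycle (for the circle, $\diam{Y}<\rho(\G)$ would do). You should also be aware that the paper's own proof shares this blind spot: it dismisses the single-edge case with ``$Y$ is fully contained in $e$, hence $m$ is the unique circumcenter,'' and in the two-edge case assumes the geodesic from $y$ to $y_2$ passes through the common vertex $v$---both of which fail on the example above.
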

\begin{proof}
Since $Y$ is compact, there exist $y_1,y_2\in Y$ such that
$\diam{Y}=d_\G(y_1,y_2)$. From $d_\G(y_1,y_2)<\rho(\G)$, it follows that there
is a unique geodesic in $\G$ joining $y_1$ and $y_2$; call it $\gamma$. We claim
that the midpoint $m$ of $\gamma$ is the unique circumcenter of $Y$, and that
the circumradius is $r=\frac{1}{2}d_\G(y_1,y_2)$. 

The proof becomes trivial if there exists an edge $e$ of $\G$ such that
$\gamma\subset e$. In that case, $Y$ is fully contained in $e$. As a result,
$m$ is the unique circumcenter. 
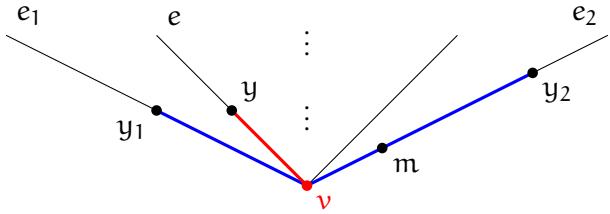
\begin{figure}[thb]
	\centering
	\begin{tikzpicture}[scale=1]
		\draw[] (-4,2) node[anchor=south west] {$e_1$} to (0,0) to (4,2)
			node[anchor=south east] {$e_2$};
		\draw[] (-2,2) node[anchor=south west] {$e$} to (0,0) to (2,2);
		\node at (0,2) {$\vdots$};
		\node at (0,1) {$\vdots$};
		\draw[very thick,blue] (-2,1) to (0,0) to (3,1.5);
		\draw[very thick,red] (-1,1) to (0,0);
		\fill[red] (0,0) node[auto,anchor = north west] {$v$} circle (2pt);
		\fill[black] (1,0.5) node[black,anchor = north west] {$m$} circle (2pt);
		\fill[black] (-2,1) node[black,anchor = north east] {$y_1$} circle (2pt);
		\fill[black] (3,1.5) node[black,anchor = north west] {$y_2$} circle (2pt);
		\fill[black] (-1,1) node[black,anchor = south west] {$y$} circle (2pt);
	\end{tikzpicture}
\caption{For a metric graph $\G$, the star $\st(v)$ of a vertex $v$ is shown.
The black lines indicate some of the incident edges of $v$, including the
designated edges $e_1,e_2$, and $e$. The thick blue line is the geodesic
$\gamma$ joining the points $y_1$ and $y_2$ lying on $e_1$ and $e_2$,
respectively.}
\label{fig:circumcenter}
\end{figure}
We now assume the case when $\gamma$ is contained in two edges $e_1,e_2$ of $\G$
incident to a vertex $v$; the configuration is depicted in
\figref{circumcenter}. Again from $\diam{Y}<\rho(\G)$, we note that
$Y\subset\st(v)$. To see that $Y\subset \overline{\B_{\G}(m,r)}$, take any
$y\in Y$ and consider the following two sub-cases. 

If $y$ lies on either $e_1$ or $e_2$, it has to lie on $\gamma$. Otherwise,
$\diam{Y}>d_\G(y_1,y_2)$---a contradiction. We must have that $y\in\gamma$,
consequently $y\in\overline{\B_{\G}(m,r)}$.

For the other sub-case, let  $y$ lies on an edge $e$ of $\G$ other than
$e_1,e_2$. Without any loss of generality, we can assume that the midpoint $m$
of $\gamma$ lies on $e_2$. This is exactly the case considered in
\figref{circumcenter}. We now note that $d_\G(y,v)\leq d_\G(y_1,v)$. If not,
then
\[ d_\G(y,y_2)=d_\G(y,v)+d_\G(v,y_2)>d_\G(y_1,v)+d_\G(v,y_2)=d_\G(y_1,y_2),\]
again leading to a contradiction. Therefore, the triangle inequality implies
\[
	d_\G(y,m)\leq d_\G(y,v) + d_\G(v,m)\leq d_\G(y_1,v) + d_\G(v,m)
	=\frac{1}{2}d_\G(y_1,y_2)=r.
\]
So, $y\in\overline{\B_{\G}(m,r)}$.

Considering the all the above case, we conclude that
$Y\subset\overline{\B_{\G}(m,r)}$. The uniqueness follows from the observation
that a circumcenter cannot lie outside $\gamma$. Hence, the proof.
\end{proof}

After establishing uniqueness of circumcenters in $\G$, we make another
important observation about the mutual distances of the circumcenters.
\begin{proposition}[Circumcenter Distances]\label{prop:radius} If $Y$ is a
    compact subset of $\G$ with $\diam{Y}<\rho(\G)$, then for any non-empty
    subset $Y'\subset Y$, we have
    $$d_\G\left(\cent{Y'},\cent{Y}\right) \leq\frac{1}{2}\diam{Y}.$$
\end{proposition}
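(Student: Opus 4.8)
The plan is to use \propref{center} to locate both circumcenters explicitly and then bound their distance. Since $Y' \subset Y$, we have $\diam{Y'} \leq \diam{Y} < 2\rho(\G)$, so \propref{center} applies to both $Y$ and $Y'$: the circumcenter $\cent{Y}$ is the midpoint of the unique geodesic $\gamma$ realizing $\diam{Y} = d_\G(y_1, y_2)$ for some $y_1, y_2 \in Y$, and $\cent{Y'}$ is the midpoint of the unique geodesic $\gamma'$ realizing $\diam{Y'}$. The key structural fact I would establish first is that $\cent{Y'} \in \overline{\B_\G(\cent{Y}, r)}$ where $r = \tfrac{1}{2}\diam{Y}$ is the circumradius of $Y$. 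This is almost immediate: by \propref{center} applied to $Y$, every point of $Y$ lies in the closed ball $\overline{\B_\G(\cent{Y}, r)}$; in particular the two diameter-realizing points of $Y'$ lie there, and hence (since the ball is geodesically convex in the regime $\diam{Y} < 2\rho(\G)$, as the uniqueness of short geodesics forces the geodesic between two points of the ball to stay inside it) the midpoint $\cent{Y'}$ of the geodesic between them also lies in $\overline{\B_\G(\cent{Y}, r)}$. That gives exactly $d_\G(\cent{Y'}, \cent{Y}) \leq r = \tfrac{1}{2}\diam{Y}$.

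Concretely, I would argue the convexity point as follows. Let $y_1', y_2' \in Y'$ with $d_\G(y_1', y_2') = \diam{Y'}$, so $\cent{Y'}$ is the midpoint of the unique geodesic $\gamma'$ joining them. Both $y_1'$ and $y_2'$ lie in $\overline{\B_\G(\cent{Y}, r)}$, i.e. $d_\G(y_i', \cent{Y}) \leq r$ for $i = 1, 2$. Following the same case analysis as in the proof of \propref{center} — either $\gamma'$ lies in a single edge, or it lies in two edges incident to a vertex $v$ — one checks that the midpoint of $\gamma'$ is within $r$ of $\cent{Y}$: in the single-edge case this is elementary Euclidean midpoint geometry on a segment, and in the two-edge case one uses that $d_\G(\cent{Y'}, \cent{Y}) \leq \max\{d_\G(y_1', \cent{Y}), d_\G(y_2', \cent{Y})\}$ after possibly routing through $v$, exactly mirroring the triangle-inequality estimate already carried out for \propref{center}.

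The main obstacle I anticipate is the geodesic-convexity claim for the ball $\overline{\B_\G(\cent{Y}, r)}$: in a general metric graph a ball of radius $r$ need not be convex, so one cannot invoke convexity for free. The saving grace is the diameter bound $\diam{Y} < 2\rho(\G)$, which forces $r < \rho(\G)$ and hence keeps everything inside a single star $\st(v)$ (or a single edge), where the local geometry is that of a tree and short balls genuinely are convex. I would therefore be careful to phrase the convexity step relative to this local picture rather than citing a general-graph statement, essentially recycling the star containment $Y \subset \st(v)$ already proved in \propref{center}. Once that is in place the bound is sharp — a pair of antipodal points on a circle of circumference just under $4\rho(\G)$, with $Y'$ a singleton, shows the factor $\tfrac{1}{2}$ cannot be improved.
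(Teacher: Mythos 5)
Your proposal is correct and follows essentially the same route as the paper: both arguments observe that $Y'\subset Y\subset\overline{\B_\G(\cent{Y},r)}$ with $r=\tfrac{1}{2}\diam{Y}<\rho(\G)$, invoke geodesic convexity of that ball, and conclude that the midpoint $\cent{Y'}$ of the geodesic between the two diameter-realizing points of $Y'$ stays in the ball. The only difference is that the paper asserts the convexity of $\overline{\B_\G(\cent{Y},r)}$ in one line from $r<\rho(\G)$, whereas you spell out the local star/tree justification for it --- a reasonable elaboration, not a different proof.
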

\begin{proof}
In the proof of \propref{center}, we saw that $\cent{Y'}$ is the (geodesic)
midpoint of two farthest points of $Y'$. Letting $r=\frac{1}{2}\diam{Y}$, we
note from \propref{center} that $Y'\subset
Y\subset\overline{\B_\G(\cent{Y},r)}$. Also, the metric ball
$\overline{\B_\G(\cent{Y},r)}$ is a geodesically convex subset of $\G$, due to
the fact that $r<\rho(\G)$. So, we have that
$\cent{Y'}\in\overline{\B_\G(\cent{Y},r)}$. Hence, $d_\G(\cent{Y'},\cent{Y})\leq
r=\frac{1}{2}\diam{Y}$.
\end{proof}

\subsection{Homotopy Equivalence}\label{subsec:homeq-gh} We now assume that
$(S,d_S)$ is a metric space such that the Gromov-Hausdorff distance
$d_{GH}(S,\G)<\frac{\beta}{3}$ for some $\beta>0$. From the definition of
Gromov-Hausdorff distance (\defref{gh}), there must exist a
$\left(\frac{\beta}{3}\right)$--correspondence $\C\in\C(\G,S)$. The
correspondence induces a (possibly non-continuous and non-unique) vertex map
$\phi:\G\map S$ such that $(a,\phi(a))\in\C$ for all $a\in\G$. The vertex map
$\phi$ extends to a simplicial map:
\begin{equation}\label{eqn:phi}
	\Ri_{\frac{2\beta}{3}}(\G)\xrightarrow{\quad\phi\quad}\Ri_\beta(S).
\end{equation}
To see that $\phi$ is a simplicial map, take a $k$--simplex
$\sigma_k=[a_0,a_1,\ldots,a_k]$ in $\Ri_\frac{2\beta}{3}(\G)$. By the
construction of the Vietoris--Rips complex, we have
$d_\G(a_i,a_j)<\frac{2\beta}{3}$ for any $0\leq i,j\leq k$. Since $\C$ is a
$\left(\frac{\beta}{3}\right)$--correspondence and $(a,\phi(a))\in\C$ for all
$a\in\G$, we have
\[
	d_S(\phi(a_i),\phi(a_j))\leq d_\G(a_i,a_j)+\frac{\beta}{3}
	<\frac{2\beta}{3}+\frac{\beta}{3}=\beta.
\]
So, the image $\phi(\sigma_k)=[\phi(a_0),\phi(a_1),\ldots,\phi(a_k)]$ is a
simplex of $\Ri_\beta(S)$.

We show that the simplicial map $\phi$ is a homotopy equivalence. We show in
\lemref{gh-inj} and \lemref{gh-sur} that for any $k\geq0$, the simplicial map
$\phi$ induces an isomorphism between the $k$--th homotopy groups. The homotopy
equivalence of $\phi$ would then follow from Whitehead's theorem
\cite[Theorem 4.5]{HATCH}. 

\begin{remark}\label{rem:basepoint} For the description and computation of
homotopy groups, the consideration of basepoint is deliberately ignored
throughout this paper. This is justified, as the considered scale parameters are
such that all the Vietoris--Rips complexes used here are path-connected. We
prove the claim in \propref{path-connected}.	
\end{remark}

The following two lemmas prove that the simplicial map $\phi$ defined in
\eqnref{phi} induces injective and surjective homomorphisms on the homotopy
groups.
\begin{lemma}[Injectivity]\label{lem:gh-inj} Let $(S,d_S)$ be a metric space
	and $\beta>0$ a number such that 
	\[
		3d_{GH}(\G,S)<\beta<\frac{3\rho(\G)}{4}.
	\]
	For any $k\geq0$, the simplicial map
	$\phi:\Ri_{\frac{2\beta}{3}}(\G)\map\Ri_\beta(S)$ (as defined in
	\eqnref{phi}) induces an injective homomorphism on the $k$--th homotopy
	group.
\end{lemma}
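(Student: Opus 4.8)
The plan is to construct a simplicial map $\psi$ going the other way and to show that $\psi\circ\phi$ recovers, up to homotopy, the scale--inclusion between two Vietoris--Rips complexes of $\G$ that Hausmann's theorem (\thmref{hausmann}) identifies as a homotopy equivalence. Injectivity of the induced map $\phi_*$ on $\pi_k$ then follows formally, because $\phi_*$ acquires a left inverse on every $\pi_k$.

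First I would reuse the $\left(\frac{\beta}{3}\right)$--correspondence $\C\in\C(\G,S)$ already fixed before \eqnref{phi}, now choosing a vertex map $\psi\colon S\map\G$ with $(\psi(s),s)\in\C$ for every $s\in S$. If $[s_0,\dots,s_k]$ is a simplex of $\Ri_\beta(S)$ then $d_S(s_i,s_j)<\beta$, so the distortion bound gives $d_\G(\psi(s_i),\psi(s_j))\le d_S(s_i,s_j)+\frac{\beta}{3}<\frac{4\beta}{3}$; since $\beta<\frac{3\rho(\G)}{4}$ forces $\frac{4\beta}{3}<\rho(\G)$, the map $\psi$ extends to a simplicial map $\psi\colon\Ri_\beta(S)\map\Ri_{\frac{4\beta}{3}}(\G)$. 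As $\frac{2\beta}{3}<\frac{4\beta}{3}$, the inclusion $\iota\colon\Ri_{\frac{2\beta}{3}}(\G)\hookrightarrow\Ri_{\frac{4\beta}{3}}(\G)$ is also available.

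Next I would check that $\psi\circ\phi$ and $\iota$, as simplicial maps $\Ri_{\frac{2\beta}{3}}(\G)\map\Ri_{\frac{4\beta}{3}}(\G)$, are contiguous. For a vertex $a\in\G$, applying the distortion bound of $\C$ to the pairs $(a,\phi(a))$ and $(\psi(\phi(a)),\phi(a))$ gives $d_\G\big(a,\psi(\phi(a))\big)<\frac{\beta}{3}$. Hence, for any simplex $\sigma=[a_0,\dots,a_k]$ of $\Ri_{\frac{2\beta}{3}}(\G)$, where $d_\G(a_i,a_j)<\frac{2\beta}{3}$, the triangle inequality bounds every pairwise distance among the points of $\iota(\sigma)\cup(\psi\circ\phi)(\sigma)=\{a_0,\dots,a_k,\psi\phi(a_0),\dots,\psi\phi(a_k)\}$ by $\frac{\beta}{3}+\frac{2\beta}{3}+\frac{\beta}{3}=\frac{4\beta}{3}$; being a finite set of diameter less than $\frac{4\beta}{3}$, it is a simplex of $\Ri_{\frac{4\beta}{3}}(\G)$. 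Thus $\iota$ and $\psi\circ\phi$ are contiguous, and the induced maps on geometric complexes satisfy $\mod{\iota}\simeq\mod{\psi}\circ\mod{\phi}$.

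Finally, since $\frac{2\beta}{3}<\frac{4\beta}{3}<\rho(\G)$, \thmref{hausmann} shows that $\iota$ is a homotopy equivalence (the homotopy equivalences $\mod{\Ri_\alpha(\G)}\to\G$ it provides for $\alpha<\rho(\G)$ being compatible with the scale--inclusions), so $\iota_*\colon\pi_k\big(\mod{\Ri_{\frac{2\beta}{3}}(\G)}\big)\to\pi_k\big(\mod{\Ri_{\frac{4\beta}{3}}(\G)}\big)$ is an isomorphism for every $k\ge0$. From $\iota_*=\psi_*\circ\phi_*$ we conclude that $\phi_*$ is injective on $\pi_k$, which is the assertion. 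I expect the one genuinely delicate point to be the claim that the scale--inclusion $\iota$ is itself a homotopy equivalence: \thmref{hausmann} as stated only asserts $\mod{\Ri_\alpha(\G)}\simeq\G$ for each fixed $\alpha<\rho(\G)$, so I would either appeal to the scale--compatibility built into Hausmann's construction, or, absent a citable statement, establish the two--scale version $\mod{\Ri_{\frac{2\beta}{3}}(\G)}\hookrightarrow\mod{\Ri_{\frac{4\beta}{3}}(\G)}$ directly via the same circumcenter/deformation ideas used in this section; everything else reduces to routine triangle--inequality bookkeeping with the $\frac{\beta}{3}$ distortion of $\C$.
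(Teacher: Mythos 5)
Your proposal is correct and follows essentially the same route as the paper: construct $\psi\colon\Ri_\beta(S)\map\Ri_{\frac{4\beta}{3}}(\G)$ from the same correspondence, verify that $\psi\circ\phi$ is contiguous to the inclusion $\Ri_{\frac{2\beta}{3}}(\G)\hookrightarrow\Ri_{\frac{4\beta}{3}}(\G)$, invoke Hausmann's theorem to see that this inclusion is a homotopy equivalence, and conclude that $\phi_*$ admits a left inverse on each $\pi_k$. The ``delicate point'' you flag is handled in the paper exactly as you suggest, by noting that the Hausmann equivalences at the two scales fit into a triangle over $\G$ commuting up to homotopy.
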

\begin{proof}
Since $d_{GH}(\G,S)<\frac{\beta}{3}$, let us assume that $\C\in\C(\G,S)$ is a
$\left(\frac{\beta}{3}\right)$--correspondence. As a result, we have the
following chain of simplicial maps
\[
\Ri_{\frac{2\beta}{3}}(\G)\xrightarrow{\quad\phi\quad}\Ri_\beta(S)\xrightarrow{\quad\psi\quad}\Ri_{\frac{4\beta}{3}}(\G),
\]
such that $(a,\phi(a))\in\C$ for all $a\in\G$ and $(\psi(b),b)\in\C$ for all
$b\in S$. 
   	
	There is also the natural inclusion
	$\Ri_{\frac{2\beta}{3}}(\G)\xhookrightarrow{\quad
	i\quad}\Ri_{\frac{4\beta}{3}}(\G)$. We first claim that $(\psi\circ\phi)$
	and $i$ are contiguous. To prove the claim, take a $l$--simplex
	$\sigma_k=[a_0,a_1,\ldots,a_l]$ in $\Ri_\frac{2\beta}{3}(\G)$. So,
	$d_\G(a_i,a_j)<\frac{2\beta}{3}$ for all $0\leq i,j\leq l$. We then have
	\begin{align*}
		d_\G((\psi\circ\phi)(a_i),a_j) &=d_\G(\psi(\phi(a_i)),a_j) \\
		&\leq d_S(\phi(a_i),\phi(a_j))+\frac{\beta}{3} \\
		&\leq d_\G(a_i,a_j)+\frac{\beta}{3}+\frac{\beta}{3} \\
		&< \frac{2\beta}{3}+\frac{2\beta}{3}=\frac{4\beta}{3}
	\end{align*}
	This implies that $(\psi\circ\phi)(\sigma_l)\cup i(\sigma_l)$ is a simplex
	of $\Ri_\frac{4\beta}{3}(\G)$. Since $\sigma_l$ is an arbitrary simplex, the
	simplicial maps $(\psi\circ\phi)$ and $i$ are contiguous. Consequently, the
	maps $\mod{\psi\circ\phi}$ and $\mod{i}$ are homotopic.
	
	Since $\frac{4\beta}{3}<\rho(\G)$, \thmref{hausmann} implies that there
	exist homotopy equivalences $T_1,T_2$ such that the following diagram
	commutes (up to homotopy):
	\begin{equation*}
		\begin{tikzpicture} [baseline=(current  bounding  box.center)]
			\node (k1) at (-2,0) {$\mod{\Ri_{\frac{2\beta}{3}}(\G)}$};
			\node (k2) at (2,0) {$\mod{\Ri_{\frac{4\beta}{3}}(\G)}$};
			\node (k3) at (0,-2) {$\G$};
			\draw[rinclusion] (k1) to node[auto] {$\mod{i}$} (k2);
			\draw[map,swap] (k1) to node[auto] {$T_1$} (k3);
			\draw[map,swap] (k2) to node[auto,swap] {$T_2$} (k3);
		\end{tikzpicture}
	\end{equation*}
	So, $\mod{i}$ is also a homotopy equivalence. Hence, the induced
	homomorphism $\mod{i}_*$ on the homotopy groups is an isomorphism. On the
	other hand, we already have $\mod{i}\simeq\mod{\psi\circ\phi}$. Therefore,
	the induced homomorphism $\left(\mod{\psi}_*\circ\mod{\phi}_*\right)$ is
	also an isomorphism, implying that $\mod{\phi}_*$ is an injective
	homomorphism on $\pi_k\left(\Ri_{\frac{2\beta}{3}}(\G)\right)$.
\end{proof}

In order show surjectivity on homotopy groups, we employ the idea of the
simplicial subdivision.
\begin{lemma}[Surjectivity]\label{lem:gh-sur} Let $(S,d_S)$ a metric space and
	$\beta>0$ a number such that 
	\[
		3d_{GH}(\G,S)<\beta<\frac{3\rho(\G)}{4}.
	\]
	Then for any $k\geq0$, the simplicial map
	$\phi:\Ri_{\frac{2\beta}{3}}(\G)\map\Ri_\beta(S)$ (as defined in
	\eqnref{phi}) induces a surjective homomorphism on the $k$--th homotopy
	group.
\end{lemma}
\begin{proof}
	As observed in \propref{path-connected}, both $\Ri_{\frac{2\beta}{3}}(\G)$
	and $\Ri_\beta(S)$ are path-connected. So, the result holds for $k=0$. 
	
For $k\geq1$, let us take an abstract simplicial complex $\K$ such that
$\mod{\K}$ is a triangulation of the $k$-dimensional sphere $\S^k$. Note that
$\K$ is a pure $k$--complex. In order to show surjectivity of $\mod{\phi}_*$ on
$\pi_k\left(\mod{\Ri_{\frac{2\beta}{3}}(\G)}\right)$, we start with a simplicial
map $g:\K\map\Ri_{\beta}(S)$, and argue that there must exist a simplicial map
$\widetilde{g}:\sd{\K}\map\Ri_{\frac{2\beta}{3}}(\G)$ such that the following
diagram commutes up to homotopy:
	\begin{equation}\label{eqn:diag}
		\begin{tikzpicture} [baseline=(current  bounding  box.center)]
			\node (k1) at (-2,0) {$\mod{\Ri_{\frac{2\beta}{3}}(\G)}$};
			\node (k2) at (2,0) {$\mod{\Ri_\beta(S)}$};
			\node (k3) at (2,-2) {$\mod{\K}$};
			\node (k4) at (-2,-2) {$\mod{\sd{\K}}$};
			\draw[map] (k1) to node[auto] {$\mod{\phi}$} (k2);
			\draw[map,swap] (k3) to node[auto] {$\mod{g}$} (k2);
			\draw[map,dashed] (k4) to node[auto] {$\mod{\widetilde{g}}$} (k1);
			\draw[map, <-, dashed] (k4) to node[auto] {$h^{-1}$} (k3);
		\end{tikzpicture}
	\end{equation}
	where the linear homeomorphism $h:\mod{\sd{\K}}\map\mod{\K}$ maps each
	vertex of $\sd{\K}$ to the corresponding point of $\mod{\K}$ as discussed in
	\secref{prelim}.

We note that each vertex of $\sd{\K}$ is the barycenter, $\bc{\sigma}$, of a
simplex $\sigma$ of $\K$. In order to construct the simplicial map
$\widetilde{g}:\sd{\K}\map\Ri_{\frac{2\beta}{3}}(\G)$, we define it on the
vertices of $\sd{\K}$ first, and prove that the vertex map extends to a
simplicial map. Let $\sigma_l=[a_0,a_1,\ldots,a_l]$ be an $l$--simplex of $\K$.
Since $g$ is a simplicial map, then the image
$g(\sigma_l)=[g(a_0),g(a_1),\ldots,g(a_l)]$ is a simplex of $\Ri_\beta(S)$,
hence a subset of $S$ with $\diam[S]{g(\sigma_l)}<\beta$. For each $0\leq i\leq
l$, there exists $a_i'\in\G$ such that $(a_i',g(a_i))\in\C$ for a
$\left(\frac{\beta}{3}\right)$--correspondence $\C\in\C(\G,S)$. For $0\leq i\leq
l$, we denote $\sigma_i'=\{a_0',a_1',\ldots,a_i'\}\subset\G$. Since $\C$ is a
$\left(\frac{\beta}{3}\right)$--correspondence, we note for later that the
diameter of $\sigma_i'$ is (strictly) less than $2\rho(\G)$. 
	\begin{equation}\label{eqn:sigma}
		\diam{\sigma_i'}\leq\diam[S]{g(\sigma_i)}+\frac{\beta}{3}<
		\beta+\frac{\beta}{3}=\frac{4\beta}{3}<\rho(\G).
	\end{equation}
	We then define the vertex map
	\[
		\widetilde{g}(\bc{\sigma_l})\eqdef\cent{\sigma_l'},
	\]
	where $\cent{\sigma_l'}$ is the circumcenter (see \defref{circum}) of
	$\sigma_l'$. Due to the diameter bound in \eqnref{sigma}, \propref{center}
	implies that the circumcenter exists uniquely. To see that $\widetilde{g}$
	extends to a simplicial map, we consider a typical $l$--simplex
	$\tau_l=[\bc{\sigma_0},\bc{\sigma_1},\ldots,\bc{\sigma_l}]$, of $\sd{\K}$,
	where $\sigma_{i-1}\prec\sigma_i\prec\sigma_l$ for $1\leq i\leq l$. Now,
	\begin{align*}
	\diam{\widetilde{g}(\tau_l)}
	&=\diam{[\cent{\sigma_0'},\cent{\sigma_1'},\ldots,\cent{\sigma_l'}]} \\
	&=\max_{0\leq i<j\leq l}\{d_\G(\cent{\sigma_i'},\cent{\sigma_j'})\} \\
	&\leq\max_{0\leq j\leq l}\left\{ \left(\frac{1}{2}\right)\diam{\sigma_j'}
	\right\}, \\
	&\quad\quad\quad\text{ by \propref{radius} as }\diam{\sigma_j'}<\rho(\G) \\
	&=\frac{1}{2}\diam{\sigma_l'} \\
	&<\frac{2\beta}{3},\text{ from \eqnref{sigma}}.
	\end{align*}
	So, $\widetilde{g}(\tau_l)$ is a simplex of $\Ri_{\frac{2\beta}{3}}(\G)$.
	This implies that $\widetilde{g}$ is a simplicial map. 
	
	We invoke \propref{homotopy} to show that the diagram commutes up to
	homotopy. We need to argue that the simplicial maps $g$ and
	$\phi\circ\widetilde{g}$ satisfy the conditions of \propref{homotopy}:
	\begin{enumerate}[(a)]
		\item For any vertex $v\in\K$, 
		\[
			(\phi\circ\widetilde{g})(v)=\phi(\cent{g(v)'})=\phi(g(v)')=g(v).
		\]

		\item For any simplex $\sigma=[a_0,a_1,\ldots,a_k]$ of $\K$, we have
		for $0\leq i\leq k$
		\begin{align*}
			d_S(g(a_i),(\phi\circ\widetilde{g})(\bc{\sigma}))	
			&=d_S(g(a_i),\phi(\cent{\sigma'})) \\
			&\leq d_\G\left(a_i',\cent{\sigma'}\right)+\frac{\beta}{3},
			\text{ since }(a_i',g(a_i))\in\C \\
			&\leq\frac{1}{2}\diam[\G]{\sigma'}+\frac{\beta}{3},
			\text{ by \propref{radius} as }\cent{a_i'}=a_i' \\
			&<\frac{2\beta}{3}+\frac{\beta}{3},
			\text{ from \eqnref{sigma}} \\
			&=\beta.
		\end{align*}
		So, $g(\sigma)\cup(\phi\circ\widetilde{g})(\bc{\sigma})$ is a simplex of
		$\Ri_\beta(S)$.
	\end{enumerate}
	Therefore, \propref{homotopy} implies that the diagram commutes. Since
	$\mod{\K}=\S^k$ and $g$ is arbitrary, we conclude that $\mod{\phi}$ induces
	a surjective homomorphism.
\end{proof}

\ghhom

\begin{proof}
By \lemref{gh-inj} and \lemref{gh-sur}, for any $k\geq0$
\[
	\mod{\phi}_*:\pi_k\left(\mod{\Ri_\frac{2\beta}{3}(\G)}\right)
	\map\pi_k\left(\mod{\Ri_\beta(S)}\right).	
\]
is an isomorphism, where the simplicial map is defined in \eqnref{phi}. By the
Whitehead's theorem, we have that $\mod{\phi}$ is a homotopy equivalence. On the
other hand, since $\frac{2\beta}{3}<\rho(\G)$, \thmref{hausmann} implies that
$\mod{\Ri_\frac{2\beta}{3}(\G)}$ is homotopy equivalent to $\G$. Therefore, we
conclude that $\mod{\Ri_\beta(S)}\simeq\G$.
\end{proof}

\section{Embedded Metric Graphs}\label{sec:emd-gr} This section considers the
Vietoris--Rips complexes of a Euclidean subset $S$ near (in the Hausdorff
distance) to an embedded metric graph $\G$. We first define an embedded metric
graph. For a (continuous) path $\gamma:[0,1]\to\R^d$, we denote by
$L_{\R^d}(\gamma)$ its usual Euclidean length\footnote{For details on the
definition of this length, see \cite[Definition 2.3.1]{burago_course_2001} for
example.}. The path $\gamma$ is called \emph{rectifiable} if $L_{\R^d}(\gamma)$
is finite. Using this length structure, a Euclidean subset $X\subset\R^d$ can be
endowed with yet another metric $d_l$, sometimes called the \emph{length
metric}, defined by
\[
	d_{l}(a,b)\eqdef\inf_{\substack{\gamma:[0,1]\to X \\
	\gamma(0)=a,\gamma(1)=b}}L_{\R^d}(\gamma),
\]
where the infimum is taken over all continuous path $\gamma$ (contained in $X$)
joining $a$ and $b$.
\begin{definition}[Embedded Metric Graph]
	A subset $\G\subset\R^d$ is called an \emph{embedded metric graph} if the
	length metric turns $(\G,d_l)$ into a metric graph (\defref{metric-graph}). 
\end{definition}
If $\G$ is an embedded metric graph, then we denote its length metric by $d_\G$
to retain uniformity with the rest of the paper. We always assume that $\G$ is
path-connected and it has finitely many vertices. We remark that the topologies
on $\G$ induced by the standard Euclidean norm $\norm{\cdot}$ and $d_\G$ are not
always the same. However, the two metrics are equivalent when the distortion of
embedding of $\G$ is finite.
\begin{definition}[Distortion of Embedding]\label{def:delta} The
\emph{distortion of embedding} of a metric graph $\G\subset\R^d$, denoted
$\delta(\G)$, is defined by 
\[
	\delta(\G)\eqdef\sup_{\substack{a,b\in\G \\ a\neq b}}
	\frac{d_\G(a,b)}{\norm{a-b}}	
\] 
\end{definition}
The distortion $\delta(\G)$ is a number greater than $1$, unless $\G$ is a
straight line segment. On the other extreme, $\delta(\G)$ can become infinity;
take $\G=\{(x,y)\in\R^2\mid x^2=y^3\}$ for example. Throughout this paper, we
always assume that the embedded metric graph $\G$ has a finite distortion of
embedding. Then, for any two points $a,b\in\G$, we have
\[
	\norm{a-b}\leq d_\G(a,b)\leq \delta(\G)\norm{a-b}.	
\]
As a consequence, the two metrics $\norm{\cdot}$ and $d_\G$ on $\G$ are
equivalent.

In this section, we consider a subset $S\subset\R^d$ that is close to an
embedded metric graph $\G$ in the Hausdorff distance. As shown for abstract
metric graphs (\thmref{gh-hom}), we wonder if the (Euclidean) Vietoris--Rips
complex of $S$ can also recover an embedded metric graph $\G$. To our
disappointment, we find that however small the Hausdorff distance $d_H(S,\G)$
be, the Euclidean Vietoris--Rips complex of $S$ is not generally homotopy
equivalent to $\G$. As a remedy, we consider the Vietoris--Rips of $S$ under a
parametric family of metrics, we call it the $\eps$--metric $(d^\eps)$. The idea
is to first consider the Euclidean Vietoris--Rips complex of $S$ for a small
scale $\eps>0$, then the shortest path distance in its $1$--skeleton defines the
$d^\eps$ metric on $S$. The metric $d^\eps$ is computable from the pairwise
Euclidean pairwise distances of points in $S$.

In \subsecref{d-eps}, we formally define this metric and explore some useful
properties thereof. We then define in \subsecref{alpha-circum} a variant of the
concept of a circumcenter in the context of $(S,d^\eps)$. Finally,
\subsecref{homeq-h} presents the main homotopy equivalence result for embedded
metric graphs.

\subsection{Path Metric}\label{subsec:d-eps} A (non-empty) subset $S\subset\R^d$
comes equipped with the standard Euclidean metric, given by the Euclidean norm
$\norm{\cdot}$. We define another metric, denoted $d^\eps$, using the pairwise
Euclidean distances of points in $S$. For a positive number $\eps$, we first
introduce the notion of an $\eps$--path.
\begin{definition}[$\eps$--Path]
	Let $S\subset\R^d$ be non-empty and $\eps>0$ a number. For $a,b\in S$, an
	\emph{$\eps$--path} from $a$ to $b$, denoted by $\mathcal{P}^\eps$, is a
	finite sequence $\{y_i\}_{i=0}^{k+1}\subset S$ such that $y_0=a$,
	$y_{k+1}=b$, and $\norm{y_i-y_{i+1}}<\eps$ for all $i=0,1,\ldots,k$.
\end{definition}
The length of $\mathcal{P}^\eps$ is defined by
\[
	L(\mathcal{P}^\eps)\eqdef\sum_{i=0}^k\norm{y_i-y_{i+1}}.	
\]
Now, we are in a position to define the path metric $d^\eps$ on $S$.
\begin{definition}[$d^\eps$--Metric]\label{def:d-eps} Let $S\subset\R^d$ be
    non-empty and $\eps>0$ a number. The $\eps$--metric, denoted $d^\eps$,
    between any $a,b\in S$ is defined by
	\[
		d^\eps(a,b)\eqdef\inf_{\mathcal{P}^\eps}L(\mathcal{P}^\eps),	
	\]
	where the infimum is taken over all $\eps$--paths $\mathcal{P}^\eps$ from
	$a$ to $b$. 
\end{definition}
The metric $d^\eps$ is not finite, in general, for all $\eps>0$. When $d^\eps$
is finite, however, $(S,d^\eps)$ is a metric space. In this metric,
we denote the diameter of a subset $Y\subset S$ by $\diam[\eps]{Y}$. For any
scale $\beta>0$, the Vietoris--Rips complex of $(S,d^\eps)$ is denoted by
$\Ri^\eps_\beta(S)$. For any two points $a,b\in S$, we now compare
$d^\eps(a,b)$ to their standard Euclidean distance $\norm{a-b}$. 
\begin{proposition}\label{prop:s-eps} Let $\eps>0$ be a number and
$S\subset\R^d$ non-empty. For any pair of points $a,b\in S$, we have
\[\norm{a-b}\leq d^\eps(a,b),\]
provided $d^\eps(a,b)$ is finite.
\end{proposition}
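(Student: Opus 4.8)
The plan is to prove the inequality directly from the definitions, using the triangle inequality for the Euclidean norm. The statement is intuitively clear: any $\eps$-path from $a$ to $b$ is a polygonal chain in $\R^d$ whose total length is at least the length of the straight segment from $a$ to $b$, and $d^\eps(a,b)$ is the infimum of these lengths. So the only real content is to make the telescoping argument precise and then pass to the infimum.

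First I would fix an arbitrary $\eps$-path $\mathcal{P}^\eps=\{y_i\}_{i=0}^{k+1}$ from $a$ to $b$, so that $y_0=a$ and $y_{k+1}=b$. By the triangle inequality in $\R^d$ applied repeatedly (a simple induction on $k$, or directly), one gets
\[
	\norm{a-b}=\norm{y_0-y_{k+1}}\leq\sum_{i=0}^k\norm{y_i-y_{i+1}}=L(\mathcal{P}^\eps).
\]
Hence $\norm{a-b}$ is a lower bound for the set $\{L(\mathcal{P}^\eps)\}$ of lengths of all $\eps$-paths from $a$ to $b$. Taking the infimum over all such paths preserves the inequality, so $\norm{a-b}\leq\inf_{\mathcal{P}^\eps}L(\mathcal{P}^\eps)=d^\eps(a,b)$, which is exactly the claim. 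The hypothesis that $d^\eps(a,b)$ is finite simply guarantees that at least one $\eps$-path exists, so that the infimum is taken over a non-empty set and is a genuine real number rather than $+\infty$; the inequality would hold vacuously (in the extended-reals sense) otherwise, but stating finiteness keeps us inside honest metric-space territory.

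There is essentially no obstacle here—the lemma is a bookkeeping step needed later (for instance to compare the $\eps$-metric Vietoris--Rips complex with the Euclidean one, or to control circumradii in $(S,d^\eps)$). The only point worth a word of care is that the infimum might \emph{not} be attained by any single $\eps$-path (the set of $\eps$-paths is discrete but infinite), so one should argue via the lower-bound-then-infimum route rather than trying to pick a minimizing path. That subtlety aside, the proof is a one-line telescoping estimate.
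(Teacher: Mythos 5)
Your proof is correct and is essentially the same as the paper's: both reduce to the telescoping triangle inequality $\norm{y_0-y_{k+1}}\leq\sum_i\norm{y_i-y_{i+1}}$ and then pass to the infimum over $\eps$--paths. The only cosmetic difference is that the paper picks a near-minimizing path within $\eta$ of the infimum while you bound every path from below and then take the infimum, which is the cleaner phrasing of the same argument.
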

\begin{proof}
This follows immediately from \defref{d-eps} and the triangle inequality.
\end{proof}

We now prove in following propositions some key metric properties of
$(S,d^\eps)$, when the subset $S\subset\R^d$ is in a close Hausdorff--proximity
to an embedded metric graph $\G$. As the following proposition shows, a geodesic
of $\G$ can be approximated by an $\eps$--path, with a reasonable bound on its
length. A particular bound has already been found in
\cite{fasy2018reconstruction}. We provide here a more general
result---additionally demonstrating that such as approximation can be made as
accurate as needed by adjusting the Hausdorff distance between $\G$ and $S$. 
See Appendix for a proof.
\begin{proposition}[Approximation of Geodesics by
$\eps$--Paths]\label{prop:geod-appx} 
	Let $\G\subset\R^d$ be an embedded metric graph and $S\subset\R^d$ such that
	$d_H(\G,S)<\frac{1}{2}\xi\eps$ for some $\xi\in(0,1)$ and $\eps>0$. For any
	$a,b\in\G$ and corresponding $a',b'\in S$ with
	$\norm{a-a'},\norm{b-b'}<\frac{1}{2}\xi\eps$, there exists an $\eps$--path
	$\mathcal{P}^\eps$ from $a'$ to $b'$ such that 
	\[
		(1-\xi)L(\mathcal{P}^\eps)\leq d_\G(a,b)+\xi\eps.	
	\]
In particular, $L(\mathcal{P}^\eps)\leq2d_\G(a,b)+\eps$ when
$\xi\leq\frac{1}{2}$.
\end{proposition}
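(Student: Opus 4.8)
The plan is to approximate a shortest path of $\G$ from $a$ to $b$ by a polygonal chain with vertices on $\G$ and then transfer each vertex to a nearby point of $S$; the whole subtlety lies in choosing the subdivision \emph{as coarse as possible}, so that the error introduced by the Hausdorff approximation does not accumulate along the chain.

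Concretely, I would fix a geodesic $\gamma$ in $\G$ joining $a$ and $b$ and put $L\eqdef d_\G(a,b)$, noting $L_{\R^d}(\gamma)=L$ since on an embedded graph $d_\G$ is precisely the length metric measured by $L_{\R^d}$. Let $m\eqdef\left\lfloor \frac{L}{(1-\xi)\eps}\right\rfloor+1$, so that simultaneously $\frac{L}{m}<(1-\xi)\eps$ and $m\leq\frac{L}{(1-\xi)\eps}+1$. Parametrizing $\gamma$ by arc length and setting $x_i\eqdef\gamma\!\left(\frac{iL}{m}\right)$ for $0\leq i\leq m$, each sub-arc of $\gamma$ has length $\frac{L}{m}$, hence $\norm{x_i-x_{i+1}}\leq\frac{L}{m}<(1-\xi)\eps$, while also $\sum_{i=0}^{m-1}\norm{x_i-x_{i+1}}\leq L_{\R^d}(\gamma)=L$ because inscribed polygonal chains underestimate arc length. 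Using $d_H(\G,S)<\frac{1}{2}\xi\eps$, choose for each $1\leq i\leq m-1$ a point $y_i\in S$ with $\norm{x_i-y_i}<\frac{1}{2}\xi\eps$, and set $y_0\eqdef a'$ and $y_m\eqdef b'$, which is consistent with $\norm{a-a'},\norm{b-b'}<\frac{1}{2}\xi\eps$. I claim $\mathcal{P}^\eps\eqdef\{y_i\}_{i=0}^{m}$ is the desired $\eps$--path.

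That $\mathcal{P}^\eps$ is an $\eps$--path is immediate from the triangle inequality: $\norm{y_i-y_{i+1}}\leq\norm{y_i-x_i}+\norm{x_i-x_{i+1}}+\norm{x_{i+1}-y_{i+1}}<\frac{1}{2}\xi\eps+(1-\xi)\eps+\frac{1}{2}\xi\eps=\eps$. For the length, the same estimate gives $\norm{y_i-y_{i+1}}<\norm{x_i-x_{i+1}}+\xi\eps$, so $L(\mathcal{P}^\eps)<L+m\xi\eps\leq L+\frac{\xi L}{1-\xi}+\xi\eps=\frac{L}{1-\xi}+\xi\eps$, using $m\leq\frac{L}{(1-\xi)\eps}+1$; multiplying by $1-\xi$ yields $(1-\xi)L(\mathcal{P}^\eps)<L+\xi(1-\xi)\eps\leq d_\G(a,b)+\xi\eps$. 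The one place that needs care --- and really the only obstacle --- is this bookkeeping on $m$: refining $\gamma$ too much makes the aggregate error $m\xi\eps$ blow up, so one must take the coarsest subdivision keeping every step below $(1-\xi)\eps$, and the bound $m\leq\frac{L}{(1-\xi)\eps}+1$ is exactly what lets the factor $(1-\xi)$ absorb that error (the degenerate case $L=0$, forcing $m=1$, is covered verbatim by the same estimates via $\norm{a'-b'}<\xi\eps$). Finally, when $\xi\leq\frac{1}{2}$ we have $\frac{1}{1-\xi}\leq2$, whence $L(\mathcal{P}^\eps)\leq2\bigl(d_\G(a,b)+\xi\eps\bigr)\leq2d_\G(a,b)+\eps$, which is the ``in particular'' claim.
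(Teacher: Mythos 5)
Your construction is correct and follows essentially the same route as the paper: subdivide a geodesic from $a$ to $b$ into sub-arcs of length at most $(1-\xi)\eps$, snap the subdivision points to $S$ via the Hausdorff bound, verify the result is an $\eps$--path by the triangle inequality, and control the accumulated error. The only difference is bookkeeping --- you bound the number of sub-arcs $m$ and the aggregate error $m\xi\eps$ directly, whereas the paper compares each sub-arc's length $(1-\xi)\eps$ to $(1-\xi)\norm{y_i-y_{i+1}}$ term by term and handles the final, shorter sub-arc separately; both yield the stated inequality.
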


The approximation of geodesics of $\G$ by $\eps$--paths facilitates the
comparison of the $d^\eps$ metric and the geodesic metric $d_\G$.
\begin{proposition}[Comparing Distances]\label{prop:distances} Let
$\G\subset\R^d$ be an embedded metric  graph, and let $S\subset\R^d$ and
$\eps>0$ be such that $d_H(\G,S)<\frac{1}{4}\eps$. For any $a,b\in\G$ and
corresponding $a',b'\in S$ with $\norm{a-a'},\norm{b-b'}<\frac{1}{4}\eps$, we
have
	\begin{equation}\label{eqn:distances}
		\frac{1}{\delta(\G)}d_\G(a,b)-\frac{\eps}{2}\leq d^\eps(a',b')
		\leq2d_\G(a,b)+\eps.
	\end{equation}
\end{proposition}
\begin{proof}
(a) We have
\begin{align*}
	d_\G(a,b) &\leq\delta(\G)\norm{a-b},
	\text{ from definition of the distortion of embedding} \\
	&\leq\delta(\G)\left(\norm{a-a'}+\norm{a'-b'}+\norm{b'-b}\right) \\
	&\leq\delta(\G)\left(\norm{a'-b'}+\frac{\eps}{2}\right) \\
	&\leq\delta(\G)\left(d^\eps(a',b') + \frac{\eps}{2}\right),
	\text{ by \propref{s-eps}}.
\end{align*}
So, the first inequality holds.

(b) For the second inequality, we apply \propref{geod-appx} on the pairs of
points $a,b\in\G$ and $a',b'\in S$, to get an $\eps$--path $\mathcal{P}^\eps$
from $a'$ to $b'$ with $L(\mathcal{P}^\eps)\leq2d_\G(a',b')+\eps$. By
\defref{d-eps}, on the other hand, $d^\eps(a',b')\leq L(\mathcal{P}^\eps)$.
Together they imply 
\[
	d^\eps(a',b')\leq2d_\G(a,b)+\eps.
\]
\end{proof}

We conclude our discussion on the $d^\eps$ metric with the observation that an
$\eps$--path has to lie close to the geodesics of $\G$---provided its length is
bounded in terms of the convexity radius and distortion of embedding of $\G$. We
make this idea concise in the following proposition. A proof is presented in
Appendix.
\begin{proposition}[Approximation of $\eps$--Paths by Geodesics]
	\label{prop:path-appx} 
Let $\G\subset\R^d$ be an embedded metric graph and $S\subset\R^d$ be such that
$4d_H(\G,S)<\eps<~\frac{2\rho(\G)}{\delta(\G)}$ for some $\eps>0$. Let
$\mathcal{P}^\eps$ be an $\eps$--path from $a\in S$ to $b\in S$ with
$\delta(\G)\left(L(\mathcal{P}^\eps)+\frac{\eps}{2}\right)<~\rho(\G)$, and
$a',b'\in\G$ are such that $\norm{a-a'},\norm{b-b'}<\frac{\eps}{4}$. If
$x'\in\G$ is an arbitrary point on the geodesic joining $a'$ and $b'$, then
there exists $y\in\mathcal{P}^\eps$ such that 
\[d^\eps(x,y)<\frac{1}{2}(3\delta(\G)+2)\eps~\forall
x\in S\text{ with }\norm{x-x'}<\frac{\eps}{4}.\]
\end{proposition}

\subsection{$\alpha$--Circumcenter}\label{subsec:alpha-circum}  As observed in
\subsecref{circumcenter}, \emph{small} subsets of $\G$ have a unique
circumcenter. The same can not always be ascertained for \emph{small} subsets of
the metric space $(S,d^\eps)$. We introduce the notion of an
$\alpha$--circumcenter so that properties similar to \propref{center} and
\propref{radius} still hold for the subsets of $(S,d^\eps)$. We now define it
for a general metric space. The inspiration stems from the property of a
circumcenter as demonstrated in \propref{center}.
\begin{definition}[$\alpha$--Circumcenter]
Let $Y$ be a subset of a metric space $(X,d_X)$ and $\alpha\geq0$ a number. A
point $x\in X$ is called an $\alpha$--circumcenter of $Y$, denoted
$\cent[\alpha]{Y}$, if $Y$ is contained in the closed (metric) ball around $x$
of radius $\frac{1}{2}\diam[X]{Y}+\alpha$.
\end{definition}
When $Y$ contains just a single point, we define \[\cent[\alpha]{\{v\}}\eqdef
v\] for any $\alpha$. We also remark that $\cent[\alpha]{Y}$ of a subset $Y$ may
not always exist in a metric space for a given $\alpha$. However, a uniform
$\alpha$ can always be chosen so that $\cent[\alpha]{Y}$ exists for certain
subsets $Y$ of the metric space $(S,d^\eps)$.
\begin{proposition}[$\alpha$--Circumcenter]\label{prop:center-alpha} Let
$\G\subset\R^d$ be an embedded metric graph. Let $S\subset\R^d$ and $\eps>0$ be
such that $4d_H(\G,S)<\eps<\frac{2\rho(\G)}{\delta(\G)}$. For any compact
$Y\subset S$ with
$\delta(\G)\left(\diam[\eps]{Y}+\frac{\eps}{2}\right)<\rho(\G)$, an
$\alpha$--circumcenter $\cent[\alpha]{Y}$ of $Y$ exists for
$\alpha=(9\delta(\G)+8)\eps$.
\end{proposition}
See Appendix for a proof.

\subsection{Homotopy Equivalence for Embedded Metric
Graphs}\label{subsec:homeq-h} Similar to abstract metric graphs, the proof of
homotopy equivalence result for embedded metric graphs is devised using the
barycentric subdivision. Proceeding in the style of \subsecref{homeq-gh}, we
subdivide the Vietoris--Rips complex of $(S,d^\eps)$, but possibly more than
once. For a scale $\beta>0$, we denote the Vietoris--Rips complex of
$(S,d^\eps)$ by $\Ri^\eps_\beta(S)$.

We now assume that $S\subset\R^d$ and $\eps>0$ such that the Hausdorff distance
$d_{H}(\G,S)<\frac{\eps}{4}$ . There is a (possibly non-continuous and
non-unique) vertex map $\phi:\G\map S$ such that
$\norm{a-\phi(a)}<\frac{\eps}{4}$ for all $a\in\G$. From \propref{distances}, we
note that for any $\beta>\eps$ the vertex map $\phi$ extends to a simplicial
map:
\begin{equation}\label{eqn:phi-h}
	\Ri_{\frac{1}{2}(\beta-\eps)}(\G)
	\xrightarrow{\quad\phi\quad}\Ri^\eps_\beta(S).
\end{equation}
We show that the simplicial map $\phi$ is a homotopy equivalence for a suitable
choice of $\beta$. In \lemref{h-inj} and \lemref{h-sur}, we show that for any
$k\geq0$, the simplicial map $\phi$ induces an isomorphism between the $k$--th
homotopy groups.

The following two lemmas prove that the simplicial map $\phi$ defined in
\eqnref{phi-h} induces injective and surjective homomorphisms on the homotopy
groups. The basepoint consideration has been ignored, as we observe from
\propref{distances} that $d^\eps$ is finite, since $\G$ is assumed to be
path-connected. As a result, the geometric complex of $\Ri^\eps_\beta(S)$ is
path-connected, and so are all the other complexes involved in our results.
\begin{lemma}[Injectivity]\label{lem:h-inj} Let $\G\subset\R^d$ be an embedded
	metric graph. Let $S\subset\R^d$ and $0<\eps<\beta$ be such that 
	\[
		4d_{H}(\G,S)<\eps<\beta\leq\frac{2\rho(G)}{3\delta(\G)}.	
	\] 
	 For any $k\geq0$, the simplicial map
	$\phi:\Ri_{\frac{1}{2}(\beta-\eps)}(\G)\map\Ri^\eps_\beta(S)$ (as defined in
	\eqnref{phi-h}) induces an injective homomorphism on the $k$--th homotopy
	group.
\end{lemma}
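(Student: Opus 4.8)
The plan is to mirror the injectivity argument for the Gromov--Hausdorff case (\lemref{gh-inj}), replacing the single $\left(\frac{\beta}{3}\right)$--correspondence with the vertex maps coming from the Hausdorff closeness of $S$ and $\G$, and replacing the distortion bookkeeping with the two-sided comparison of $d^\eps$ and $d_\G$ from \propref{distances}. Concretely, I would first choose a vertex map $\phi:\G\map S$ with $\norm{a-\phi(a)}<\frac{\eps}{4}$ for all $a\in\G$, and a vertex map $\psi:S\map\G$ with $\norm{\psi(b)-b}<\frac{\eps}{4}$ for all $b\in S$ (using $d_H(\G,S)<\frac{\eps}{4}$). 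The goal is to exhibit a chain
\[
	\Ri_{\frac{1}{2}(\beta-\eps)}(\G)\xrightarrow{\ \phi\ }\Ri^\eps_\beta(S)
	\xrightarrow{\ \psi\ }\Ri_{\alpha'}(\G)
\]
for some radius $\alpha'<\rho(\G)$, such that $\psi\circ\phi$ is contiguous to the natural inclusion $\Ri_{\frac{1}{2}(\beta-\eps)}(\G)\hookrightarrow\Ri_{\alpha'}(\G)$. Then Hausmann's theorem (\thmref{hausmann}) applied to both $\Ri_{\frac{1}{2}(\beta-\eps)}(\G)$ and $\Ri_{\alpha'}(\G)$, together with the compatibility of the homotopy equivalences with the inclusion, forces $\mod{i}$ to be a homotopy equivalence, hence $\mod{i}_*=\mod{\psi}_*\circ\mod{\phi}_*$ is an isomorphism on $\pi_k$, so $\mod{\phi}_*$ is injective.

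The two things to verify are that $\psi$ is genuinely a simplicial map into a complex of radius below $\rho(\G)$, and that the contiguity holds. For the first: if $[b_0,\ldots,b_l]$ is a simplex of $\Ri^\eps_\beta(S)$, then $d^\eps(b_i,b_j)<\beta$, and since $\norm{\psi(b_i)-b_i}<\frac\eps4$, the first inequality of \propref{distances} (applied with $a=\psi(b_i)$, $b=\psi(b_j)$, $a'=b_i$, $b'=b_j$) gives $d_\G(\psi(b_i),\psi(b_j))\leq\delta(\G)\left(d^\eps(b_i,b_j)+\frac\eps2\right)<\delta(\G)\left(\beta+\frac\eps2\right)$. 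Using $\eps<\beta$ this is $<\tfrac32\delta(\G)\beta\leq\rho(\G)$ by the hypothesis $\beta\leq\frac{2\rho(\G)}{3\delta(\G)}$; so take $\alpha'$ slightly below $\rho(\G)$, say $\alpha'=\frac32\delta(\G)\beta$ if one allows the mild abuse that Hausmann's theorem needs a strict inequality, or more carefully note the bound is strict because $\eps<\beta$ strictly. Thus $\psi$ maps $\Ri^\eps_\beta(S)$ simplicially into $\Ri_{\alpha'}(\G)$ with $\alpha'<\rho(\G)$.

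For the contiguity: take a simplex $\sigma_l=[a_0,\ldots,a_l]$ of $\Ri_{\frac12(\beta-\eps)}(\G)$, so $d_\G(a_i,a_j)<\frac12(\beta-\eps)$. I must bound $d_\G\bigl((\psi\circ\phi)(a_i),a_j\bigr)$. By the triangle inequality this is at most $d_\G(\psi(\phi(a_i)),\psi(\phi(a_j)))+d_\G(\psi(\phi(a_j)),a_j)$; the first term is controlled via \propref{distances} as above by $\delta(\G)\left(d^\eps(\phi(a_i),\phi(a_j))+\frac\eps2\right)$, and $d^\eps(\phi(a_i),\phi(a_j))\leq 2d_\G(a_i,a_j)+\eps<(\beta-\eps)+\eps=\beta$ by the second inequality of \propref{distances}. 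The second term $d_\G(\psi(\phi(a_j)),a_j)$ is bounded using $\norm{\phi(a_j)-a_j}<\frac\eps4$ and $\norm{\psi(\phi(a_j))-\phi(a_j)}<\frac\eps4$ via the distortion (these points are within $\frac\eps2$ in the Euclidean norm, hence within $\delta(\G)\cdot\frac\eps2$ in $d_\G$, provided they lie in a common $\rho(\G)$--ball, which they do by the preceding estimates). Summing, one gets a bound of the form $\frac32\delta(\G)\beta$ (or marginally more, which can be absorbed by slightly enlarging $\alpha'$ while keeping it below $\rho(\G)$), so $(\psi\circ\phi)(\sigma_l)\cup i(\sigma_l)$ is a simplex of $\Ri_{\alpha'}(\G)$. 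The main obstacle I anticipate is arithmetic rather than conceptual: one must keep the final radius $\alpha'$ strictly below $\rho(\G)$ while simultaneously exceeding both $\frac32\delta(\G)\beta$ and the additive slack from the two separate vertex-map approximations; the hypothesis $\beta\leq\frac{2\rho(\G)}{3\delta(\G)}$ is tight enough that any sloppiness in the constants (for instance, whether the $+\frac\eps2$ terms compound) would break the bound, so the estimates must be organized so the $\eps$--terms collapse exactly as in \propref{distances} rather than accumulating. Once the radii line up, the rest is the verbatim Hausmann/Whitehead argument from \lemref{gh-inj}.
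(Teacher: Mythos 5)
Your overall architecture --- the chain $\Ri_{\frac{1}{2}(\beta-\eps)}(\G)\xrightarrow{\ \phi\ }\Ri^\eps_\beta(S)\xrightarrow{\ \psi\ }\Ri_{\alpha'}(\G)$, contiguity of $\psi\circ\phi$ with the inclusion, then Hausmann plus Whitehead --- is exactly the paper's, and your verification that $\psi$ is simplicial into a complex of radius below $\rho(\G)$ is correct. The gap is in the contiguity estimate. You split
\[
d_\G\bigl((\psi\circ\phi)(a_i),a_j\bigr)\leq d_\G\bigl(\psi(\phi(a_i)),\psi(\phi(a_j))\bigr)+d_\G\bigl(\psi(\phi(a_j)),a_j\bigr),
\]
which yields $\delta(\G)\bigl(\beta+\frac{\eps}{2}\bigr)+\delta(\G)\frac{\eps}{2}=\delta(\G)(\beta+\eps)$. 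This extra $\delta(\G)\frac{\eps}{2}$ cannot be ``absorbed by slightly enlarging $\alpha'$'': the lemma assumes only $\eps<\beta$, so $\delta(\G)(\beta+\eps)$ can be as large as nearly $2\delta(\G)\beta$, which the hypothesis $\beta\leq\frac{2\rho(\G)}{3\delta(\G)}$ bounds only by $\frac{4}{3}\rho(\G)$. The required radius $\alpha'$ may therefore exceed $\rho(\G)$, Hausmann's theorem no longer applies to $\Ri_{\alpha'}(\G)$, and the argument collapses. You flagged this risk yourself (``whether the $+\frac{\eps}{2}$ terms compound'') but did not resolve it.

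The fix is to avoid the triangle-inequality split entirely, which is what the paper does. Apply the first inequality of \propref{distances} \emph{once}, to the pair of graph points $\psi(\phi(a_i))$ and $a_j$ with corresponding sample points $\phi(a_i)$ and $\phi(a_j)$: this is legitimate because $\norm{\psi(\phi(a_i))-\phi(a_i)}<\frac{\eps}{4}$ by the definition of $\psi$, and $\norm{a_j-\phi(a_j)}<\frac{\eps}{4}$ by the definition of $\phi$, so there is no need to route $a_j$ through $\psi(\phi(a_j))$. This gives
\[
d_\G\bigl(\psi(\phi(a_i)),a_j\bigr)\leq\delta(\G)\Bigl(d^\eps(\phi(a_i),\phi(a_j))+\tfrac{\eps}{2}\Bigr)<\delta(\G)\Bigl(\beta+\tfrac{\eps}{2}\Bigr)<\tfrac{3}{2}\delta(\G)\beta\leq\rho(\G),
\]
using $d^\eps(\phi(a_i),\phi(a_j))\leq 2d_\G(a_i,a_j)+\eps<(\beta-\eps)+\eps=\beta$ exactly as you computed. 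Taking $\alpha'=\delta(\G)\bigl(\beta+\frac{\eps}{2}\bigr)$, the radii line up strictly below $\rho(\G)$ and the remainder of your argument (the commuting triangle of homotopy equivalences from \thmref{hausmann} and the conclusion that $\mod{\phi}_*$ is injective) goes through verbatim.
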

The proof of surjectivity follows an argument very similar to \lemref{gh-inj},
and is presented in Appendix. In order show surjectivity on homotopy groups, we
again employ the idea of simplicial subdivision.
\begin{lemma}[Surjectivity]\label{lem:h-sur} Let $\G\subset\R^d$ be an embedded
	metric graph. Let $S\subset\R^d$ and $0<\eps<\beta$ be such that
	\[
		4d_{H}(\G,S)<\eps<8\delta(\G)\alpha+2(\delta(\G)+1)\eps\leq\beta
		<\frac{2\rho(\G)}{3\delta(\G)},
	\]
where $\alpha=(9\delta(\G)+8)\eps$. For any $k\geq0$, the simplicial map
$\phi:~\Ri_{\frac{1}{2}(\beta-\eps)}(\G)\map\Ri^\eps_\beta(S)$ (as defined in
\eqnref{phi-h}) induces a surjective homomorphism on the $k$--th homotopy group.
\end{lemma}
\begin{proof}
	As already remarked, the complexes $\Ri_{\frac{1}{2}(\beta-\eps)}(\G)$ and
	$\Ri^\eps_\beta(S)$ are path-connected. So, the result holds for $k=0$.
	
For $k\geq1$, let us take an abstract simplicial complex $\K$ such that
$\mod{\K}$ is a triangulation of the $k$--dimensional sphere $\S^k$. In order to
show surjectivity of $\mod{\phi}_*$, we start with a simplicial map
$g:\K\to\Ri^\eps_\beta(S)$, and argue that there is a natural number $N$ and a
simplicial map $\widetilde{g}:\sd[N]{\K}\map\Ri_{\frac{1}{2}(\beta-\eps)}(\G)$
such that the following diagram commutes up to homotopy:
	\begin{equation}\label{eqn:diag-1}
		\begin{tikzpicture} [baseline=(current  bounding  box.center)]
			\node (k1) at (-2,0) {$\mod{\Ri_{\frac{1}{2}(\beta-\eps)}(\G)}$};
			\node (k2) at (2,0) {$\mod{\Ri^\eps_\beta(S)}$};
			\node (k3) at (2,-2) {$\mod{\K}$};
			\node (k4) at (-2,-2) {$\mod{\sd[N]{\K}}$};
			\draw[rinclusion] (k1) to node[auto] {$\mod{\phi}$} (k2);
			\draw[map,swap] (k3) to node[auto] {$\mod{g}$} (k2);
			\draw[map,dashed] (k4) to node[auto] {$\mod{\widetilde{g}}$} (k1);
			\draw[map, <-, dashed] (k4) to node[auto] {$h^{-1}$} (k3);
		\end{tikzpicture}
	\end{equation}
	where the linear homeomorphism $h:\mod{\sd[N]{\K}}\map\mod{\K}$ maps each
	vertex of $\sd[N]{\K}$ to the corresponding point of $\mod{\K}$.

	\paragraph{Step 1}
	We first note the effect of subdividing $\K$ once. We let
	$\beta_1=\frac{\beta}{2}+\alpha$. We note that
	\begin{align*}
	\beta_1 &=\frac{\beta}{2}+\alpha \\
	&\leq\frac{\beta}{2}+\frac{\beta-(2\delta(\G)+2)\eps}{8\delta(\G)},
	\text{ since }\beta\geq8\delta(\G)\alpha+(2\delta(\G)+2)\eps \\
	&\leq\frac{\beta}{2}+\frac{\beta}{8\delta(\G)} \\
	&\leq\frac{\beta}{2}+\frac{\beta}{2},\text{ since }\delta(\G)\geq1 \\
	&=\beta.
	\end{align*}
	So, the inclusion in \eqnref{diag-2} is justified. Now, we show that there
	exists a simplicial map $g_1:\sd{\K}\to\Ri^\eps_{\beta_1}(S)$ such that the
	following diagram commutes up to homotopy:
	\begin{equation}\label{eqn:diag-2}
		\begin{tikzpicture} [baseline=(current  bounding  box.center)]
			\node (k1) at (-2,0) {$\mod{\Ri_{\beta_1}^\eps(S)}$};
			\node (k2) at (2,0) {$\mod{\Ri^\eps_\beta(S)}$};
			\node (k3) at (2,-2) {$\mod{\K}$};
			\node (k4) at (-2,-2) {$\mod{\sd{\K}}$};
			\draw[rinclusion] (k1) to node[auto] {$\mod{i}$} (k2);
			\draw[map,swap] (k3) to node[auto] {$\mod{g}$} (k2);
			\draw[map,dashed] (k4) to node[auto] {$\mod{g_1}$} (k1);
			\draw[map, <-, dashed] (k4) to node[auto] {$h^{-1}$} (k3);
		\end{tikzpicture}
	\end{equation}
	We first note that each vertex of $\sd{\K}$ is the barycenter,
	$\bc{\sigma_k}$, of a $k$--simplex $\sigma_k$ of $\K$. In order to construct
	the simplicial map $\widetilde{g}:\sd{\K}\map\Ri^\eps_{\beta}(\G)$, we
	define it on the vertices $\sd{\K}$ first, and prove that the vertex map
	extends to a simplicial map. 

	Let $\sigma_k=[a_0,a_1,\ldots,a_k]$ be a $k$--simplex of $\K$. Since $g$ is
	a simplicial map, we have that the image
	$g(\sigma_k)=[g(a_0),g(a_1),\ldots,g(a_k)]$ is a subset of $S$ with
	$\diam[\eps]{g(\sigma_k)}<\beta$. We define 
	\[
	g_1(\bc{\sigma_k})\eqdef\cent[\alpha]{g(\sigma_k)}.
	\]
	To see that $g_1$ extends to a simplicial map, consider a typical
	$k$--simplex, $\tau_k=[\bc{\sigma_0},\bc{\sigma_1},\ldots,\bc{\sigma_k}]$,
	of $\sd{\K}$, where $\sigma_i\prec\sigma_{i+1}$ for $0\leq i\leq
	k-1$ and $\sigma_i\in\K$. Now,
	\begin{align*}
	\diam[\eps]{g_1(\tau_k)}
	&=\diam[\eps]{[\cent[\alpha]{g(\sigma_0)},\cent[\alpha]{g(\sigma_1)},
	\ldots,\cent[\alpha]{g(\sigma_k)}]}\\
	&=\max_{0\leq i<j\leq k}\{d^\eps(\cent[\alpha]{g(\sigma_i)},
	\cent[\alpha]{g(\sigma_j)})\} \\
	&\leq\max_{0\leq j\leq k}\left\{\frac{\diam[\eps]{g(\sigma_j)}}{2}
	+\alpha\right\},\text{ by \propref{center-alpha}}\\
	&\leq\frac{\diam[\eps]{g(\sigma_k)}}{2}+\alpha \\
	&<\frac{\beta}{2}+\alpha. \\
	&=\beta_1
	\end{align*}
	So, $g_1(\tau_k)$ is a simplex of $\Ri^\eps_{\beta_1}(S)$. This implies that
	$g_1$ is a simplicial map. 
	
	We invoke \propref{homotopy} to show that Diagram \eqnref{diag-2} commutes
	up to homotopy. We need to argue that the simplicial maps $g$ and $(i\circ
	g_1)$ satisfy the conditions of \propref{homotopy}:
	\begin{enumerate}[(a)]
		\item For any vertex $v\in\K$, 
		\[
			(i\circ g_1)(v)=i(\cent[\alpha]{g(v)})=i(g(v))=g(v).
		\]

		\item For any simplex $\sigma=[a_0,a_1,\ldots,a_k]$ of $\K$, we have
		for $0\leq i\leq k$
		\begin{align*}
			d_\eps(g(a_i),(i\circ g_1)(\bc{\sigma}))	
			&=d_\eps(g(a_i),\cent[\alpha]{g(\sigma)}) \\
			&=\frac{\diam[\eps]{g(\sigma)}}{2}+\alpha,
			\text{ from \propref{center-alpha}} \\
			&<\frac{\beta}{2}+\alpha=\beta_1\leq\beta.
		\end{align*}
		So, $g(\sigma)\cup(i\circ g_1)(\bc{\sigma})$ is a simplex of
		$\Ri^\eps_\beta(S)$.
	\end{enumerate}
	Therefore, \propref{homotopy} implies that the diagram commutes.

	\paragraph{Step 2}
	Choose a natural number $N$ such that $N\geq2+\log_2{\delta(\G)}$. After $N$
	subdivisions, we then have the following diagram:
	\begin{equation}\label{eqn:step}
		\begin{tikzpicture} [scale=0.72, baseline=(current  bounding  box.center)]
			\node (k1) at (6,0) {$\mod{\Ri^\eps_{\beta}(S)}$};
			\node (k2) at (6,-2) {$\mod{\K}$};
			\node (k3) at (2,0) {$\mod{\Ri^\eps_{\beta_1}(S)}$};
			\node (k4) at (2,-2) {$\mod{\sd{\K}}$};
			\node (k5) at (-2,0) {$\mod{\Ri^\eps_{\beta_2}(S)}$};
			\node (k6) at (-2,-2) {$\mod{\sd[2]{\K}}$};
			\node (k9) at (-7,0) {$\mod{\Ri^\eps_{\beta_N}(S)}$};
			\node (k10) at (-7,-2) {$\mod{\sd[N]{\K}}$};
			\draw[rinclusion] (k3) to node[auto] {$i$} (k1);
			\draw[map,swap] (k2) to node[auto,swap] {$\mod{g}$} (k1);
			\draw[map] (k2) to node[auto] {$h^{-1}$} (k4);
			\draw[rinclusion] (k5) to node[auto] {$i$} (k3);
			\draw[map,swap] (k4) to node[auto] {$\mod{g_1}$} (k3);
			\draw[map] (k6) to node[auto,swap] {$\mod{g_2}$} (k5);
			\draw[map] (k4) to node[auto] {$h^{-1}$} (k6);
			
			\draw[dashed] (k6) to (-3.8,-2);
			\draw[dashed] (-3.2,-.5) to (-3.2,-1.5);
			
			\draw[rinclusion] (k9) to node[auto] {$i$} (k5);
			\draw[map,swap] (k10) to node[auto,swap] {$\mod{g_N}$} (k9);
			\draw[map] (-4.5,-2) to node[auto] {$h^{-1}$} (k10);		
		\end{tikzpicture}
	\end{equation}
	where $\beta_N=\frac{\beta}{2^N}+\alpha\sum_{i=0}^{N-1}\frac{1}{2^i}$.
Moreover, the Diagram \eqnref{step} commutes, since each smaller rectangle
commutes as shown in step 1. 

\paragraph{Step 3}
We also note that
\begin{align*}
	\beta_N &=\frac{\beta}{2^N}+\alpha\sum_{i=0}^{N-1}\frac{1}{2^i} \\
	&\leq\frac{\beta}{2^N}+2\alpha \\
	&\leq\frac{\beta}{4\delta(\G)}+2\alpha,\text{ since }2^N\geq4\delta(\G)\\
	&\leq\frac{\beta}{4\delta(\G)}+\left(\frac{\beta}{4\delta(\G)}
		-\frac{2(1+\delta(\G))\eps}{4\delta(\G)}\right),
		\text{ since }\beta\geq8\delta(\G)\alpha+2(1+\delta(\G))\eps\\
	&=\frac{1}{2\delta(\G)}(\beta-\eps)
		-\frac{\eps}{2}
\end{align*}
This together with $\propref{distances}$ justify the simplicial map $\psi$ in
\eqnref{diag-3}. Moreover, the inclusion $i$ in \eqnref{step} is contiguous to
$(\phi\circ\psi)$. So, the following diagram commutes up to homotopy.
\begin{equation}\label{eqn:diag-3}
	\begin{tikzpicture} [baseline=(current  bounding  box.center)]
		\node (k1) at (-4,0) {$\mod{\Ri_{\beta_N}^\eps(S)}$};
		\node (k2) at (4,0) {$\mod{\Ri^\eps_{\beta}(S)}$};
		\node (k3) at (4,-2) {$\mod{\K}$};
		\node (k4) at (-4,-2) {$\mod{\sd[N]{\K}}$};
		\node (k5) at (0,0) {$\mod{\Ri_{\frac{1}{2}(\beta-\eps)}(\G)}$};
		\draw[map] (k1) to node[auto] {$\mod{\psi}$} (k5);
		\draw[map] (k5) to node[auto] {$\mod{\phi}$} (k2);
		\draw[map,swap] (k3) to node[auto] {$\mod{g}$} (k2);
		\draw[map,dashed] (k4) to node[auto] {$\mod{g_N}$} (k1);
		\draw[map, <-, dashed] (k4) to node[auto] {$h^{-1}$} (k3);
	\end{tikzpicture}
\end{equation}
Therefore, $\widetilde{g}=(\psi\circ g_N)$ is the desired simplicial map. Since
$\mod{\K}=\S^k$ and $g$ is arbitrary, we conclude that $\phi$ induces a
surjective homomorphism on the $k$--th homotopy group.
\end{proof}

\hhom
\begin{proof}
	By \lemref{h-inj} and \lemref{h-sur}, for any $k\geq0$
	\[
		\mod{\phi}_*:\pi_k(\mod{\Ri_{\frac{1}{2}(\beta-\eps)}(\G)})
		\map\pi_k(\mod{\Ri^\eps_\beta(S)}).	
	\]
is an isomorphism, where the simplicial map is defined in \eqnref{phi-h}. By
Whitehead's theorem, we have that $\mod{\phi}$ is a homotopy equivalence. On the
other hand, since $\frac{\beta-\eps}{2}<\rho(\G)$, \thmref{hausmann} implies
that $\mod{\Ri_{\frac{1}{2}(\beta-\eps)}(\G)}$ is homotopy equivalent to $\G$.
Therefore, we conclude that $\mod{\Ri^\eps_\beta(S)}\simeq\G$.
\end{proof}	

\section{Conclusion}
The current work succeeds in providing guarantees for a homotopy type recovery
of a metric graph from the Vietoris--Rips complexes of a metric space close to
it---both in Gromov--Hausdorff and Hausdorff distance. The study provokes a
number of interesting future research directions. Metric graphs are the
simplest, albeit interesting, class of geodesic spaces one can consider. It is
reasonable to believe that recovery of a more general geodesic space, in the
same vein, must require an explicit knowledge of its curvature bounds. To the
best of the author's knowledge, it is not known, even for a two-dimensional
Riemannian manifold, how to choose a suitable scale for the Vietoris--Rips
complex of a metric space Gromov--Hausdorff close it. Although we provide a
homotopy equivalent recovery of an embedded metric graph, the resulting complex
$\Ri^\eps_\beta(S)$, being a very high-dimensional object without a natural
embedding, does not lend itself well to practical applications. Since $S$ is a
subset $\R^d$, one can consider the shadow (as defined in \cite{Chambers2010})
of the complex as a \emph{reconstruction} of $\G$. As pointed out in
\cite[Proposition 5.3]{Chambers2010}, the shadow of a complex is notorious for
being topologically unfaithful. When the Hausdorff between $S$ and $\G$ is very
small, however, we conjecture to have homotopy equivalent shadow of
$\Ri^\eps_\beta(S)$, hence providing a homotopy equivalent reconstruction of
$\G$ with an embedding in the same ambient. 

\paragraph{Acknowledgments}
The author would like to thank the school of information at the University of
California, Berkeley, where this work was completed, for all its support.

\bibliography{main}

\begin{thebibliography}{10}

\bibitem{geological-survey}
{USGS} {E}arthquake {H}azards {P}rogram.

\bibitem{aanjaneya2012metric}
Mridul Aanjaneya, Fr\'ed\'eric Chazal, Daniel Chen, Marc Glisse, Leonidas
  Guibas, and Dmitriy Morozov.
\newblock Metric graph reconstruction from noisy data.
\newblock {\em International Journal of Computational Geometry \&
  Applications}, 22(04):305--325, 2012.

\bibitem{Adamaszek2018}
Michał Adamaszek, Henry Adams, and Florian Frick.
\newblock Metric reconstruction via optimal transport.
\newblock {\em SIAM Journal on Applied Algebra and Geometry}, 2:597--619, 2018.

\bibitem{Adams_2019}
Henry Adams and Joshua Mirth.
\newblock Metric thickenings of {E}uclidean submanifolds.
\newblock {\em Topology and its Applications}, 254:69--84, Mar 2019.

\bibitem{Ahmed:2015:CTD:2820783.2820810}
Mahmuda Ahmed, Brittany~Terese Fasy, Matt Gibson, and Carola Wenk.
\newblock Choosing thresholds for density-based map construction algorithms.
\newblock In {\em Proceedings of the 23rd SIGSPATIAL International Conference
  on Advances in Geographic Information Systems}, SIGSPATIAL '15, pages
  24:1--24:10, New York, NY, USA, 2015. ACM.

\bibitem{ATTALI2013448}
Dominique Attali, Andr{\'e} Lieutier, and David Salinas.
\newblock Vietoris-{R}ips complexes also provide topologically correct
  reconstructions of sampled shapes.
\newblock {\em Computational Geometry}, 46(4):448 -- 465, 2013.
\newblock 27th Annual Symposium on Computational Geometry (SoCG 2011).

\bibitem{burago_course_2001}
Dmitri Burago, Yuri Burago, and Sergei Ivanov.
\newblock {\em A {Course} in {Metric} {Geometry}}, volume~33 of {\em Graduate
  {Studies} in {Mathematics}}.
\newblock American Mathematical Society, Providence, Rhode Island, June 2001.

\bibitem{Chambers2010}
Erin~W. Chambers, Vin de~Silva, Jeff Erickson, and Robert Ghrist.
\newblock {V}ietoris--{R}ips complexes of planar point sets.
\newblock {\em Discrete {\&} Computational Geometry}, 44(1):75--90, Jul 2010.

\bibitem{co-tpbr-08}
Fr{\'e}d{\'e}ric Chazal and S.~Y. Oudot.
\newblock Towards persistence-based reconstruction in {Euclidean} spaces.
\newblock In {\em Proc. 24th ACM Sympos. Comput. Geom.}, pages 232--241, 2008.

\bibitem{dey_graph_2018_socg}
Tamal~K. Dey, Jiayuan Wang, and Yusu Wang.
\newblock Graph reconstruction by discrete {M}orse theory.
\newblock In {\em 34th International Symposium on Computational Geometry},
  pages 31:1--31:15, 2018.

\bibitem{Edelsbrunner2009AlphaS}
Herbert Edelsbrunner.
\newblock Alpha shapes — a survey.
\newblock 2009.

\bibitem{fasy2018reconstruction}
Brittany~Terese Fasy, Rafal Komendarczyk, Sushovan Majhi, and Carola Wenk.
\newblock On the reconstruction of geodesic subspaces of $\mathbb{R}^n$.
\newblock {\em International Journal of Computational Geometry \&
  Applications}, 32(01n02):91--117, 2022.

\bibitem{HATCH}
Allen Hatcher.
\newblock {\em Algebraic Topology}.
\newblock Cambridge University Press, {F}irst edition, 2002.

\bibitem{hausmann_1995}
Jean-Claude Hausmann.
\newblock On the {Vietoris-Rips} {Complexes} and a {Cohomology} {Theory} for
  {Metric} {Spaces}.
\newblock In Frank Quinn, editor, {\em Prospects in {Topology} ({AM}-138)},
  Proceedings of a {Conference} in {Honor} of {William} {Browder}. ({AM}-138),
  pages 175--188. Princeton University Press, 1995.

\bibitem{kim2020homotopy}
Jisu Kim, Jaehyeok Shin, Fr{\'e}d{\'e}ric Chazal, Alessandro Rinaldo, and Larry
  Wasserman.
\newblock Homotopy reconstruction via the \v{C}ech complex and the
  {V}ietoris-{R}ips complex.
\newblock In {\em 36th International Symposium on Computational Geometry (SoCG
  2020)}. Schloss Dagstuhl-Leibniz-Zentrum f{\"u}r Informatik, 2020.

\bibitem{latschev_2001}
J.~Latschev.
\newblock Vietoris-{Rips} complexes of metric spaces near a closed {Riemannian}
  manifold.
\newblock {\em Archiv der Mathematik}, 77(6):522--528, December 2001.

\bibitem{Lecci:2014:SAM:2627435.2697074}
Fabrizio Lecci, Alessandro Rinaldo, and Larry Wasserman.
\newblock Statistical analysis of metric graph reconstruction.
\newblock {\em J. Mach. Learn. Res.}, 15(1):3425--3446, January 2014.

\bibitem{MUNK}
James~R. Munkres.
\newblock {\em Elements Of Algebraic Topology}.
\newblock Addison-Wesley Publishing Company, {S}econd edition, 1996.

\bibitem{DeSilva2003AWD}
Vin~De Silva.
\newblock A weak definition of delaunay triangulation.
\newblock 2003.

\bibitem{spanier1994algebraic}
Edwin~H Spanier.
\newblock {\em Algebraic topology}, volume~55.
\newblock Springer Science \& Business Media, 1994.

\end{thebibliography}
\bibliographystyle{plain}

\clearpage
\appendix 
\section{Appendix}
\begin{proposition}[Path-connectedness]\label{prop:path-connected} Let $(S,d_S)$
	be a metric space and $\beta>0$ a number such that $d_{GH}(S,\G)<\beta$,
	then for any positive $\alpha$, the geometric complex of
	$\Ri_{\alpha+\beta}(S)$ is path-connected.
\end{proposition}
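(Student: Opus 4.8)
The plan is to reduce the statement to the connectivity of the $1$--skeleton of $\Ri_{\alpha+\beta}(S)$. Since the geometric realization of a simplicial complex is path--connected precisely when its $1$--skeleton is connected as a graph, it suffices to exhibit, for any two vertices $s,s'\in S$, a finite edge--path $s=q_0,q_1,\ldots,q_m=s'$ in $\Ri_{\alpha+\beta}(S)$, i.e.\ a sequence of points of $S$ with $d_S(q_i,q_{i+1})<\alpha+\beta$ for every $i$. To build such a sequence I would transport a path from $\G$ to $S$ through a correspondence, exactly as in the construction of the simplicial map in \eqnref{phi}. By \defref{gh}, the hypothesis $d_{GH}(S,\G)<\beta$ yields a $\beta$--correspondence $\C\in\C(\G,S)$, so that $\mod{d_\G(a_1,a_2)-d_S(s_1,s_2)}<\beta$ whenever $(a_1,s_1),(a_2,s_2)\in\C$.

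Given $s,s'\in S$, property (b) of a correspondence provides $a,a'\in\G$ with $(a,s),(a',s')\in\C$. Because $\G$ is a path--connected metric graph, hence a length space, there is a geodesic $\gamma$ in $\G$ from $a$ to $a'$ of finite length $L=d_\G(a,a')$. Parametrizing $\gamma$ by arc length and subdividing $[0,L]$ into $m$ equal pieces with $L/m<\alpha$, I obtain points $a=p_0,p_1,\ldots,p_m=a'$ along $\gamma$ with
\[ d_\G(p_i,p_{i+1})\leq L(\gamma|_{[p_i,p_{i+1}]})=\frac{L}{m}<\alpha \]
for all $0\leq i\leq m-1$. Using property (a) of the correspondence, choose for each interior $p_i$ a partner $q_i\in S$ with $(p_i,q_i)\in\C$, and set $q_0\eqdef s$, $q_m\eqdef s'$, which are legitimate since $(p_0,s),(p_m,s')\in\C$. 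The distortion bound then gives, for every $i$,
\[ d_S(q_i,q_{i+1})< d_\G(p_i,p_{i+1})+\beta<\alpha+\beta, \]
so each pair $[q_i,q_{i+1}]$ has diameter less than $\alpha+\beta$ and is an edge of $\Ri_{\alpha+\beta}(S)$. Hence $s=q_0,q_1,\ldots,q_m=s'$ is an edge--path, the $1$--skeleton is connected, and path--connectedness of $\mod{\Ri_{\alpha+\beta}(S)}$ follows.

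The one step deserving care—indeed the crux—is the interaction between the two scales: the correspondence only controls distances up to the additive error $\beta$, so the argument succeeds only because the intrinsic length structure of $\G$ permits interpolating the geodesic with $\G$--steps smaller than \emph{any} prescribed $\alpha>0$. This is precisely what converts a single $\beta$--correspondence into $S$--steps of size below $\alpha+\beta$, and it explains why the conclusion holds for every positive $\alpha$. I would be careful to invoke a genuine $\beta$--correspondence (distortion $<\beta$), matching the normalization already adopted for the map in \eqnref{phi}; any looser bound on the distortion would force the additive error above $\alpha+\beta$ for small $\alpha$ and the edge--path would break down.
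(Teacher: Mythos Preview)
Your proof is correct and follows essentially the same route as the paper: both pick a $\beta$--correspondence, pull the two sample points back to $\G$, produce an $\alpha$--chain there, and push it forward to an $(\alpha+\beta)$--chain in $S$. The only cosmetic difference is that you construct the $\alpha$--chain in $\G$ explicitly by subdividing a geodesic, whereas the paper simply invokes that $\Ri_\alpha(\G)$ is path--connected to obtain such a chain.
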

\begin{proof}
	Let $a,b\in S$, then there exist points $a',b'\in\G$ such that
	$(a',a),(b',b)\in\C$, where $\C\in\C(\G,S)$ is a $\beta$--correspondence. Since
	$\G$ is assumed to be path-connected, so is $\Ri_\alpha(\G)$. As a result, there
	exists a sequence $\{x_i'\}_{i=0}^{k+1}\subset\G$ forming a path in
	$\Ri_\alpha(\G)$ joining $a'$ and $b'$. In other words, $x_0'=a'$,
	$x_{k+1}'=b'$, and $d_\G(x_i',x_{i+1}')<\alpha$ for $0\leq i\leq k$. There is
	also a corresponding sequence $\{x_i\}_{i=0}^{k+1}\subset S$ such that $x_0=a$,
	$x_{k+1}=b$, and $(x_i',x_i)\in\C$ for all $i$. We note that
	\[
		d_S(x_i,x_{i+1})\leq d_\G(x_i',x_{i+1}')+\beta<\alpha+\beta.	
	\]
	So, the sequence $\{x_i\}$ produces a path in $\Ri_{\alpha+\beta}(S)$ joining
	$a$ and $b$. We conclude that the geometric complex of $\Ri_{\alpha+\beta}(S)$
	is path-connected.
\end{proof}
	
\begin{proof}[Proof of \propref{geod-appx}]
	Let $\gamma:[0,1]\to\G$ be a shortest path on $\G$ joining $a$ and $b$. We can
	find a partition 
	\[0=t_0<t_1<\ldots<t_k<t_{k+1}=1\] of $[0,1]$ such that 
	\begin{equation}\label{eqn:gamma}
	L\left(\gamma\big\lvert_{[t_i,t_{i+1}]}\right)
	=(1-\xi)\eps\text{ for }i=0,1,\ldots,k-1,
	\end{equation}
	and $L\left(\gamma\big\lvert_{[t_k,t_{k+1}]}\right)\leq(1-\xi)\eps$. 
	
	Note that $k$ could also be $0$ if $L(\gamma)<(1-\xi)\eps$. In order to
	construct a $\eps$--path $\mathcal{P}^\eps=\{y_i\}_{i=0}^{k+1}$, we first set
	$y_0=a'$ and $y_{k+1}=b'$. For $i\in\{1,2,\ldots,k\}$, choose $y_i\in S$ such
	that $\norm{\gamma(t_i)-y_i}<\frac{1}{2}\xi\eps$. Since $\gamma(t_0)=a$ and
	$\gamma(t_{k+1})=b$, we have that
	\[
	\norm{\gamma(t_i)-y_i}<\frac{1}{2}\xi\eps\text{ for all }i=0,1,\ldots,k+1.
	\]
	We first show that $\mathcal{P}^\eps$ is, in fact, an $\eps$--path. For each
	$i=0,1,\ldots,k$, from the triangle inequality we get
	\begin{align*}
	\norm{y_i-y_{i+1}}&\leq\norm{y_i-\gamma(t_i)}+\norm{\gamma(t_i)-\gamma(t_{i+1})}
	+\norm{\gamma(t_{i+1})-y_{i+1}}\\
	&<\frac{1}{2}\xi\eps+\norm{\gamma(t_i)-\gamma(t_{i+1})}+\frac{1}{2}\xi\eps \\
	&\leq d_\G(\gamma(t_i),\gamma(t_{i+1}))+\xi\eps \\
	&= L\left(\gamma\big\lvert_{[t_i,t_{i+1}]}\right) + \xi\eps,\text{ since }
	\gamma\text{ is a geodesic} \\
	&\leq(1-\xi)\eps + \xi\eps,\text{ by \eqnref{gamma}} \\
	&=\eps.
	\end{align*}
	Finally,
	\begin{align*}
		d_\G(a,b) 
		&= \sum_{i=0}^k L\left(\gamma\big\lvert_{[t_i,t_{i+1}]}\right) \\
		&= \sum_{i=0}^{k-1} d_\G(\gamma(t_i),\gamma(t_{i+1})) + 
			d_\G(\gamma(t_k),\gamma(t_{k+1})) \\
		&= \sum_{i=0}^{k-1} (1-\xi)\eps + d_\G(\gamma(t_k),\gamma(t_{k+1}))
			\text{, from \eqnref{gamma}} \\
		&> (1-\xi)\sum_{i=0}^{k-1} \norm{y_i-y_{i+1}} + 
			d_\G(\gamma(t_k),\gamma(t_{k+1})), \\
			&\quad\quad\quad\quad
			\text{ as }\norm{y_i-y_{i+1}}>\eps \\
		&\geq (1-\xi)\sum_{i=0}^{k-1}\norm{y_i-y_{i+1}} + 
			\norm{\gamma(t_k)-\gamma(t_{k+1})} \\
		&> (1-\xi)\sum_{i=0}^{k-1}\norm{y_i-y_{i+1}} + 
			\norm{y_k-y_{k+1}}-\xi\eps,
			\text{ since }\norm{\gamma(t_i)-y_i}<\frac{1}{2}\xi\eps \\
		&\geq(1-\xi)\sum_{i=0}^{k-1}\norm{y_i-y_{i+1}} + 
			(1-\xi)\norm{y_k-y_{k+1}}-\xi\eps,\text{ since }0<\xi<1 \\
		&= (1-\xi)\sum_{i=0}^{k}\norm{y_i-y_{i+1}}-\xi\eps \\
		&= (1-\xi)L(\mathcal{P}^\eps)-\xi\eps.
	\end{align*}
\end{proof}
	
\begin{proof}[Proof of \propref{path-appx}]
	We first note from \propref{distances} that
	\[
		d_\G(a',b')\leq\delta(\G)\left(d^\eps(a,b)+\frac{\eps}{2}\right)
		\leq\delta(\G)\left(L(\mathcal{P}^\eps)+\frac{\eps}{2}\right)
		<\rho(\G).
	\]
	So, there exists a unique shortest path, say $\gamma$, joining $a'$ and $b'$.
	Moreover, there exists a vertex $v$ of $\G$ such that $\gamma\subset\st(v)$.
	We now let $\mathcal{P}^\eps=\{y_i\}_{i=0}^{k+1}\subset S$.
	
	Since $d_H(\G,S)<\frac{\eps}{4}$, there exists a corresponding sequence
	$\{y_i'\}_{i=0}^{k+1}\subset\G$ such that $y_0'=a'$ and $y_{k+1}'=b'$, and
	$\norm{y_i-y_i'}<\frac{\eps}{4}$ for all $0\leq i\leq k$. So, we have
	\[
		d_\G(y_i',y_{i+1}')\leq\delta(\G)\norm{y_i'-y_{i+1}'}
		\leq\delta(\G)\left(\norm{y_i-y_{i+1}}+\frac{\eps}{2}\right)
		\leq\delta(\G)\left(L(\mathcal{P}^\eps)+\frac{\eps}{2}\right)<\rho(\G).
	\] 
	Consequently, for each $0\leq i\leq k$, there exists a unique geodesic in $\G$
	joining $y_i'$ and $y_{i+1}'$; call it $\widetilde{\gamma}_i$. Concatenating
	them, we get a (continuous) path $\widetilde{\gamma}:[0,1]\to\G$ joining $a'$
	and $b'$ and a partition 
	\[0=t_0<t_1<\ldots<t_{k}<t_{k+1}=1\] of $[0,1]$ such
	that $\widetilde{\gamma}([t_i,t_{i+1}])=\widetilde{\gamma}_i$ for $0\leq i\leq
	k$.
	
	We now argue that $\gamma\subset\widetilde{\gamma}$. Let us assume the contrary.
	We note that both $\gamma$ and $\widetilde{\gamma}$ are continuous paths on $\G$
	joining the same endpoints---$a'$ and $b'$. The only way $\widetilde{\gamma}$
	fails to cover $\gamma$ if there exists a vertex $w$ of $\G$ such that
	$w\in\widetilde{\gamma}$ and $w\neq v$, i.e.,
	$L(\widetilde{\gamma})\geq2\rho(\G)$. Without loss of generality, then there
	exists $0\leq m\leq k$ such that $d_\G(y_m',b')\geq\rho(\G)$. As a result,
	\begin{align*}
	L(\mathcal{P}^\eps) & \geq\sum_{i=m}^k\norm{y_i-y_{i+1}}\\
	&\geq\norm{y_m-y_{k+1}}\text{, by the triangle inequality} \\
	&\geq\norm{y_m'-b'}-\frac{\eps}{2} \\
	&\geq\frac{1}{\delta(\G)}d_\G(y_{m}',b')-\frac{\eps}{2} \\
	&\geq\frac{\rho(\G)}{\delta(\G)}-\frac{\eps}{2}.
	\end{align*}
	This is a contradiction. So, we have $\gamma\subset\widetilde{\gamma}$.
	Therefore, $x'$ belongs to $\widetilde{\gamma}$. By the construction of
	$\widetilde{\gamma}$, there exists $0\leq l\leq k$ such that
	$x'\in\widetilde{\gamma}_l$. Without any loss of generality, let's assume that
	$d_\G(y_l',x')\leq\frac{1}{2}d_\G(y_l',y_{l+1}')$. Then by \propref{distances},
	\[
		d_\G(y_l',y_{l+1}')\leq\delta(\G)\left(d^\eps(y_l,y_{l+1})+\frac{\eps}{2}\right)
		<\delta(\G)\left(\eps+\frac{\eps}{2}\right)
		\leq\frac{3}{2}\delta(\G)\eps.
	\]
	We set $y=y_l\in\mathcal{P}^\eps$, and note from \propref{distances}, we get
	\[
		d^\eps(x,y)=d^\eps(x,y_l)\leq2d_\G(x',y_l')+\eps\leq2\times
		\frac{1}{2}d_\G(y_l',y_{l+1}')+\eps
		<\frac{1}{2}\left(3\delta(\G)+2\right)\eps.
	\]
\end{proof}

\begin{proof}[Proof of \propref{center-alpha}]
	Since $Y$ is compact, there exist $a,b\in Y$ such that
	$\diam[\eps]{Y}=d^\eps(a,b)$. Let $\eta>0$ be arbitrary. Then there exists an
	$\eps$--path $\mathcal{P}^\eps$ from $a$ to $b$ such that
	$L(\mathcal{P}^\eps)\leq d^\eps(a,b)+\eta$. Let $\theta\in S$ be a point on
	$\mathcal{P}^\eps$ such that
	\[
		d^\eps(a,\theta)+\eta\geq\frac{1}{2}L(\mathcal{P}^\eps)-\eps,\text{ and }
		d^\eps(\theta,b)+\eta\geq\frac{1}{2}L(\mathcal{P}^\eps)-\eps.
	\]
	Since $L(\mathcal{P}^\eps)\geq d^\eps(a,b)$,
	\begin{equation}\label{eqn:theta}
		d^\eps(a,\theta)+\eta\geq\frac{1}{2}d^\eps(a,b)-\eps,\text{ and }
		d^\eps(\theta,b)+\eta\geq\frac{1}{2}d^\eps(a,b)-\eps.
	\end{equation}
	\begin{figure}[thb]
		\centering
		\begin{tikzpicture}[scale=1]
			\draw[dashed,gray,very thick] (-4.5,0) .. controls (0,2) and (0,-2) 
			.. (4.5,0);
			\draw[dashed,gray,very thick] (0,-4.5) .. controls (2,0) and (-2,0) 
			.. (0,4.5);
			\fill[gray] (0,0) node[anchor=south west] {$w$} circle (2pt);
			\draw[fill=red, red] (-3.7,0.5) node[anchor=south] {$a$}
			circle (1pt) to (-3,0.2)
			circle (1pt) to (-2,1)
			circle (1pt) to (-1,0)
			circle (1pt) to (-0.3,-0.1) node[anchor=north] {$d$}
			circle (1pt) to (1,-0.7) node[anchor=north] {$\theta$}
			circle (1pt) to (2,-0.3)
			circle (1pt) to (3,-0.7)
			circle (1pt) to (3.5,0) node[anchor=south] {$b$}
			circle (1pt);
			\draw[fill=blue,blue] (0.5,-3.7) node[anchor=west] {$a_1$}
			circle (1pt) to (0.2,-3)
			circle (1pt) to (1,-2)
			circle (1pt) to (0,-1)
			circle (1pt) to (-0.15,0.5) node[anchor=west,yshift=4pt] {$d_1$}
			circle (1pt) to (-0.7,1) node[anchor=north] {$\theta_1$}
			circle (1pt) to (-0.3,2)
			circle (1pt) to (-0.7,3)
			circle (1pt) to (0,3.5) node[anchor=south] {$b_1$}
			circle (1pt);
			\draw[fill=black,black] (3.5,0) 
			circle (1pt) to (2.3,-0.8)
			circle (1pt) to (1,-0.5)
			circle (1pt) to (-0.4,0.4)
			circle (1pt) to (-0.2,1) node[anchor=south west] {$d_2$}
			circle (1pt) to (-0.7,2) 
			circle (1pt) to (0,2.5)
			circle (1pt) to (0,3.5)
			circle (1pt);
			\fill[gray] (-3.6,0.32) node[anchor=north] {$a'$} circle (1.5pt);
			\fill[gray] (3.5,-0.4) node[anchor=north west] {$b'$} circle (1.5pt);
			\fill[gray] (0.32,-3.6) node[anchor=east] {$a_1'$} circle (1.5pt);
			\fill[gray] (-0.4,3.5) node[anchor=east] {$b_1'$} circle (1.5pt);
		\end{tikzpicture}
		\caption{The star of the vertex $w$ of the metric graph $\G$ is shown. The
		$\eps$--paths $\mathcal{P}^\eps_1$ and $\mathcal{P}^\eps_2$ are shown in red
		and blue, respectively.}
		\label{fig:case-1}
	\end{figure}
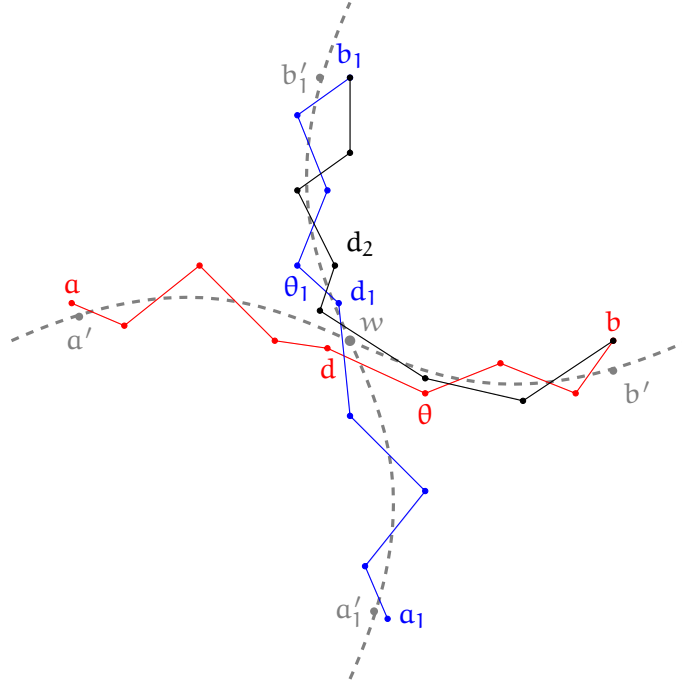
	Similarly for $Y_1$, there exist points $a_1,b_1,\theta_1\in Y_1$ such that
	$\diam[\eps]{Y_1}=d^\eps(a_1,b_1)$, and 
	\begin{equation}\label{eqn:theta1}
		d^\eps(a_1,\theta_1)+\eta\geq\frac{1}{2}d^\eps(a_1,b_1)-\eps,\text{ and }
		d^\eps(\theta_1,b_1)+\eta\geq\frac{1}{2}d^\eps(a_1,b_1)-\eps.
	\end{equation}
	Since $d_H(\G,S)<\frac{\eps}{4}$, there exist corresponding points
	$a',b',a_1',b_1'\in\G$ such that $\norm{a-a'}<\frac{\eps}{4}$,
	$\norm{b-b'}<\frac{\eps}{4}$, $\norm{a_1-a_1'}<\frac{\eps}{4}$, and
	$\norm{b_1-b_1'}<\frac{\eps}{4}$. See \figref{case-1}.
	
	By \propref{distances}, get 
	\[
		d_\G(a',b')\leq\delta(\G)\left(d^\eps(a,b)+\frac{\eps}{2}\right)
		\leq\delta(\G)\left(\diam[\eps]{Y}+\frac{\eps}{2}\right)<\rho(\G).
	\]
	Similarly $d_\G(a_1',b_1')<\rho(\G)$. So, $a',b',a_1',b_1\in\st(w)$ for some
	vertex $w$ of $\G$. For the proof, we consider the most extreme case, when all
	of them lie on two different edges of $\G$ incident to $w$. Let $v\in S$ be such
	that $\norm{v-w}<\frac{\eps}{4}$. Since $w$ lies on the geodesic joining $a,b$,
	\propref{path-appx} implies that there exists $d\in S$ on $\mathcal{P}^\eps$
	such that $d^\eps(v,d)\leq\frac{1}{2}[3\delta(\G)+2]\eps$. Similarly, there
	exists $d_1\in S$ on $\mathcal{P}^\eps_1$ such that
	$d^\eps(v,d_1)\leq\frac{1}{2}[3\delta(\G)+2]\eps$. Without any loss of
	generality, we assume that $\theta$ lies between $d,b$ on $\mathcal{P}^\eps$ and
	$\theta_1$ lies between $d_1,b_1$ on $\mathcal{P}^\eps_1$. Also, there exists an
	$\eps$--path $\mathcal{P}^\eps_2$ from $b$ to $b_1$ such that
	$L(\mathcal{P}^\eps_2)\leq d^\eps(b,b_1)+\eta$. By \propref{path-appx}, there
	exists $d_2\in S$ on $\mathcal{P}^\eps_2$ such that
	$d^\eps(v,d_2)\leq\frac{1}{2}(3\delta(\G)+2)\eps$. From the triangle inequality,
	we get 
	\[d^\eps(d,d_i)\leq(3\delta(\G)+2)\eps\text{ for }i=1,2,\text{ and }
	d^\eps(d_1,d_2)\leq(3\delta(\G)+2)\eps.\]
	Now,
	\begin{align*}
		&d^\eps(b,b_1) +\eta\geq d^\eps(b,d_2) + d^\eps(d_2,b_1) \\
		&\geq [d^\eps(b,d) - d^\eps(d_2,d)] + [d^\eps(b_1,d_1) - d^\eps(d_1,d_2)],
		\text{ by the triangle inequality}\\
		&=d^\eps(b,d) + d^\eps(b_1,d_1) - d^\eps(d,d_2) - d^\eps(d_1,d_2) \\
		&=[d^\eps(b,d)+\eta] + [d^\eps(b_1,d_1)+\eta] 
			- d^\eps(d,d_2) - d^\eps(d_1,d_2) - 2\eta\\
		&\geq [d^\eps(b,\theta) + d^\eps(\theta,d)] 
			+ [d^\eps(b_1,\theta_1) + d^\eps(\theta_1,d_1)]
			- d^\eps(d,d_2) - d^\eps(d_1,d_2) - 2\eta \\
		&\geq \left[\frac{d^\eps(a,b)}{2}-\eps+d^\eps(\theta,d)\right] 
			+ \left[\frac{d^\eps(a_1,b_1)}{2}-\eps+d^\eps(\theta_1,d_1)\right] \\ 
		&\quad\quad- d^\eps(d,d_2) - d^\eps(d_1,d_2) - 4\eta \\
		&\geq \frac{d^\eps(a,b)}{2} + d^\eps(\theta,d) + d^\eps(\theta_1,d_1) 
			- d^\eps(d,d_2) - d^\eps(d_1,d_2) -2\eps - 4\eta,
			\text{ as }d^\eps(a_1,b_1)\geq0.
	\end{align*}
	On the other hand, $d^\eps(a,b)\geq d^\eps(b,b_1)$, because
	$d^\eps(a,b)=\diam[\eps]{Y}$ and $b,b_1\in Y$. So, 
	\[
		d^\eps(\theta,d) + d^\eps(\theta_1,d_1)\leq\frac{d^\eps(a,b)}{2}
		+ d^\eps(d,d_2) + d^\eps(d_1,d_2) + 2\eps + 5\eta.
	\]
	From the triangle inequality, we can then write
	\begin{align*}
	d^\eps(\theta,\theta_1)
	&\leq d^\eps(\theta,d) + d^\eps(d,d_1) + d^\eps(d_1,\theta_1) \\
	&\leq \frac{d^\eps(a,b)}{2}
	+ d^\eps(d,d_2) + d^\eps(d_1,d_2) + 2\eps + 5\eta + d^\eps(d,d_1) \\
	&\leq \frac{d^\eps(a,b)}{2}
	+(d^\eps(d,d_2) + d^\eps(d_1,d_2)+d^\eps(d,d_1)) + 2\eps + 5\eta  \\
	&= \frac{d^\eps(a,b)}{2} + 3\times(3\delta(\G)+2)\eps + 2\eps + 5\eta \\
	&= \frac{\diam[\eps]{Y}}{2} + (9\delta(\G)+8)\eps + 5\eta
	\end{align*}
	Since $\eta$ is arbitrary, we have the result.
\end{proof}

\begin{proof}[Proof of \lemref{h-inj}]
	Since $d_{H}(\G,S)<\frac{\eps}{4}$, from \propref{distances} we have the
	following chain of simplicial maps
	\[
		\Ri_{\frac{1}{2}(\beta-\eps)}(\G)\xrightarrow{\quad\phi\quad}
		\Ri^\eps_\beta(S)\xrightarrow{\quad\psi\quad}
		\Ri_{\delta(\G)\left(\beta+\frac{\eps}{2}\right)}(\G),
	\]
		such that $\norm{a-\phi(a)}<\frac{\eps}{4}$ for all $a\in\G$ and
		$\norm{\psi(b)-b}<\frac{\eps}{4}$ for all $b\in S$. 
		   
		There is also the natural inclusion
		$\Ri_{\frac{1}{2}(\beta-\eps)}(\G)\xhookrightarrow{\quad
		i\quad}\Ri_{\delta(\G)\left(\beta+\frac{\eps}{2}\right)}(\G)$. We first
		claim that $(\psi\circ\phi)$ and $i$ are contiguous. To prove the claim,
		take a $l$--simplex $\sigma_l=[a_0,a_1,\ldots,a_l]$ of
		$\Ri_{\frac{1}{2}(\beta-\eps)}(\G)$. So,
		$d_\G(a_i,a_j)<\frac{1}{2}(\beta-\eps)$ for all $0\leq i,j\leq l$. So, we
		have
		\begin{align*}
			d_\G((\psi\circ\phi)(a_i),a_j) &=d_\G(\psi(\phi(a_i)),a_j) \\
			&\leq2d^\eps(\phi(a_i),\phi(a_j))+\eps,\text{ by \propref{distances}} \\
			&\leq\delta(\G)\left(2d^\eps(a_i,a_j)+\eps+\frac{\eps}{2}\right) \\
			&<\delta(\G)\left((\beta-\eps) + \eps + \frac{\eps}{2}\right) \\
			&=\delta(\G)\left(\beta + \frac{\eps}{2}\right)
		\end{align*}
		This implies that $(\psi\circ\phi)(\sigma_l)\cup i(\sigma_l)$ is a simplex
		of $\Ri_{\delta(\G)\left(\beta+\frac{\eps}{2}\right)}(\G)$. Since
		$\sigma_l$ is an arbitrary simplex, the simplicial maps $(\psi\circ\phi)$
		and $i$ are contiguous. Consequently, the maps $\mod{\psi\circ\phi}$ and
		$\mod{i}$ are homotopic.
		
		Since $\eps<\beta$, we get
		\[
		\delta(\G)\left(\beta+\frac{\eps}{2}\right)
		<\delta(\G)\left(\beta+\frac{\beta}{2}\right)
		=\delta(\G)\left(\frac{3\beta}{2}\right)
		\leq\rho(\G),
		\]
		from \thmref{hausmann} there exist homotopy equivalences $T_1,T_2$ such that
		the following diagram commutes (up to homotopy):
		\begin{equation*}
			\begin{tikzpicture} [baseline=(current  bounding  box.center)]
				\node (k1) at (-3,0) {$\mod{\Ri_{\frac{1}{2}(\beta-\eps)}(\G)}$};
				\node (k2) at (3,0) {$\mod{\Ri_{\delta(\G)\left(\beta+\frac{\eps}{2}\right)}(\G)}$};
				\node (k3) at (0,-2) {$\G$};
				\draw[rinclusion] (k1) to node[auto] {$\mod{i}$} (k2);
				\draw[map,swap] (k1) to node[auto] {$T_1$} (k3);
				\draw[map,swap] (k2) to node[auto,swap] {$T_2$} (k3);
			\end{tikzpicture}
		\end{equation*}
		So, $\mod{i}$ is also a homotopy equivalence. Hence, the induced
		homomorphism $\mod{i}_*$ on the homotopy groups is an isomorphism. On the
		other hand, $\mod{i}\simeq\mod{\psi\circ\phi}$. Therefore, the induced
		homomorphism $(\mod{\psi}_*\circ\mod{\phi}_*)$ is also an isomorphism,
		implying that $\mod{\phi}_*$ is an injective homomorphism.
\end{proof}
\end{document}